\let\oldsqrt\sqrt
\def\sqrt{\mathpalette\DHLhksqrt}
\def\DHLhksqrt#1#2{%
\setbox0=\hbox{$#1\oldsqrt{#2\,}$}\dimen0=\ht0
\advance\dimen0-0.2\ht0
\setbox2=\hbox{\vrule height\ht0 depth -\dimen0}%
{\box0\lower0.4pt\box2}}
\newcommand{\R}{\mathbb{R}} 
\newcommand{\N}{\mathbb{N}} 
\newcommand{\dist}{\textnormal{dist}} 
\newcommand{\essinf}{\textnormal{ess\,inf}} 
\renewcommand{\phi}{\varphi}
\newcommand{\cD}{{\mathcal D}}
\newcommand{\cE}{{\mathcal E}}
\newcommand{\eps}{\varepsilon}
\renewcommand{\epsilon}{\varepsilon}
\theoremstyle{definition}
\newtheorem{defi}{Definition}[section]
\newtheorem{remark}[defi]{Remark}
\theoremstyle{plain} 
\newtheorem{thm}[defi]{Theorem}
\newtheorem{prop}[defi]{Proposition}
\newtheorem{lemma}[defi]{Lemma}
\numberwithin{equation}{section}
\title[a hopf lemma for the regional fractional laplacian]{A Hopf lemma for the regional fractional Laplacian}
\author[N. Abatangelo, M.M. Fall, and R.Y. Temgoua]{Nicola Abatangelo$^1$, Mouhamed Moustapha Fall$^2$, and Remi Yvant Temgoua$^{2,3}$}
\address{$^1$Dipartimento di Matematica, Alma Mater Studiorum Universit\`a di Bologna, P.zza di Porta S. Donato 5, 40126 Bologna, Italy.}
\email{nicola.abatangelo@unibo.it}
\address{$^2$African Institute for Mathematical Sciences in Senegal (AIMS Senegal), KM 2, Route de Joal, B.P. 1418 Mbour, S\'en\'egal.}
\email{mouhamed.m.fall@aims-senegal.org}
\address{$^3$Goethe-Universit\"{a}t Frankfurt am Main, Institut f\"{u}r Mathematik.
Robert-Mayer-Str. 10, 60325 Frankfurt, Germany.}
\email{temgoua@math.uni-frankfurt.de, remi.y.temgoua@aims-senegal.org}
\date{\today}
\begin{document}

\begin{abstract}
We provide a Hopf boundary lemma for the regional fractional Laplacian $(-\Delta)^s_{\Omega}$, with $\Omega\subset\R^N$ a bounded open set. More precisely, given $u$ a pointwise or weak super-solution of the equation $(-\Delta)^s_{\Omega}u=c(x)u$ in $\Omega$, we show that the ratio $u(x)/(\dist(x,\partial\Omega))^{2s-1}$ is strictly positive as $x$ approaches the boundary $\partial\Omega$ of $\Omega$. We also prove a strong maximum principle for distributional super-solutions. 
\end{abstract}

\maketitle

{\footnotesize
\begin{center}
\textit{Keywords:} regional fractional Laplacian, Hopf boundary lemma, pointwise super-solution, weak super-solution, distributional super-solution.
\end{center}
}

\section{Introduction and main results} 
Let $s\in(1/2,1)$ and let $\Omega\subset\R^N~(N\geq2)$ be a bounded domain with $C^{1,1}$ boundary. The regional fractional Laplacian $(-\Delta)^s_{\Omega}$ of a function $u:\Omega\to\R$ is defined as
\begin{align}\label{def-regional}
 (-\Delta)^s_{\Omega}u(x)=c_{N,s}P.V.\int_{\Omega}\frac{u(x)-u(y)}{{|x-y|}^{N+2s}}\;dy=c_{N,s}\lim\limits_{\epsilon\rightarrow0^+}\int_{\Omega\setminus B_{\epsilon}(x)}\frac{u(x)-u(y)}{{|x-y|}^{N+2s}}\;dy,
\end{align}
provided that the limit exists. We recall that ``$P.V.$'' stands for the Cauchy principal value and that the normalization constant $c_{N,s}$ is explicitly given by 
\[
c_{N,s}:=\bigg(\int_{\R^N}\frac{1-\cos(\zeta_1)}{{|\zeta|}^{N+2s}}\;d\zeta\bigg)^{-1}=s(1-s)\frac{2^{2s}\,\Gamma(\frac{N+2s}{2})}{\pi^{N/2}\,\Gamma(2-s)}.
\]
For functions $u$ belonging to $C^{2s+\epsilon}_{loc}(\Omega)\cap L^{\infty}(\Omega)$ for some $\epsilon>0$, the integral in \eqref{def-regional} is finite. 
In this way then, we say that \eqref{def-regional} is defined \textit{pointwisely} in $\Omega$.




The study of the regional fractional Laplacian has received some growing attention in recent years. However, in contrast to that of the\footnote{Sometimes it is also called \textit{restricted} fractional Laplacian.} fractional Laplacian
\begin{align}\label{def-fractional}
 (-\Delta)^su(x)=c_{N,s}P.V.\int_{\R^N}\frac{u(x)-u(y)}{{|x-y|}^{N+2s}}\;dy,
\end{align}
the theory of elliptic problems driven by the regional fractional Laplacian is less developed in spite of some known results. We are concerned here in particular with the Hopf boundary lemma, which is a powerful tool for the study of qualitative properties of solutions like, for example, their monotonicity and symmetry, also via moving plane arguments. 

In \cite{greco2016hopf}, the authors obtained a Hopf lemma for pointwise super-solutions for an elliptic equation involving the fractional Laplacian $(-\Delta)^s$ under the assumption that an interior ball condition holds. For the Hopf boundary lemma for weak super-solutions related to the fractional $p$-Laplacian, we refer to \cite{del2017hopf} and references therein. Other references on the Hopf boundary lemma for fractional Laplacian can be found in \cite{biswas2020hopf,caffarelli2010variational,chen2020hopf,fall2015overdetermined,jin2019hopf,li2019hopf}. However, to the best of our knowledge, an analogue result for the regional fractional Laplacian has not been investigated before. 
Let us mention here that while the Hopf lemma is usually used to run a moving plane method in the case of the fractional Laplacian, as recalled above, this does not seem to be the case for the regional fractional Laplacian. The moving plane method for $(-\Delta)^s_{\Omega}$ remains indeed a challenging question: the main difficulty relies on the fact that the operator depends on the domain and therefore, upon scaling the domain, the operator changes as well. We expect a symmetry breaking in the case of the regional fractional Laplacian defined on bounded domains.

Here, we investigate the validity of a suitable Hopf-type lemma for super-solutions of the equation
\begin{equation}\label{main-problem-to-study}
(-\Delta)^s_{\Omega}u=c(x)u\quad\text{in}~~\Omega.
\end{equation}
We analyse this both for the case of \textit{pointwise} and \textit{weak} super-solutions. Moreover, we also study a strong maximum principle for \textit{distributional} super-solutions to \eqref{main-problem-to-study}. So, before stating our main results, let us recall the following definitions (notations are defined in Section \ref{section:preliminaries}).
\begin{defi}\label{definition-of-pointwise-supersolution}
	We say that a function $u:\Omega\to\R$ is a pointwise super-solution of \eqref{main-problem-to-study}
	if $u\in C^{2s+\epsilon}_{loc}(\Omega)\cap L^{\infty}(\Omega)$ for some $\epsilon>0$ and
	\begin{equation*}
	(-\Delta)^s_{\Omega}u(x)\geq c(x)u(x)\qquad\text{for any }x\in\Omega.
	\end{equation*}
\end{defi}
\begin{defi}\label{definition-of-weak-supersolution}
	We say that a function $u:\Omega\to\R$ is a weak super-solution of \eqref{main-problem-to-study} if $u\in H^s(\Omega)$ and 
	\begin{equation*}
	\cE(u,\phi)\geq\int_{\Omega}cu\phi\qquad\text{for any }\phi\in C^{\infty}_c(\Omega),\ \phi\geq0\text{ in }\Omega.
	\end{equation*}
\end{defi}
\begin{defi}\label{dfinition-of-distributional-supersolution}
	We say that a function $u:\Omega\to\R$ is a distributional super-solution of \eqref{main-problem-to-study} if $u\in L^1(\Omega)$ and
	\begin{equation*}
	\int_{\Omega}u\,(-\Delta)^s_{\Omega}\phi\geq\int_{\Omega}cu\phi\qquad\text{for any }\phi\in C^{\infty}_c(\Omega),\ \phi\geq0\text{ in }\Omega.
	\end{equation*}
	In this case, we briefly write 
	\begin{equation*}
	(-\Delta)^s_{\Omega} u \geq c(x)u \qquad\text{in }\cD'(\Omega).
	\end{equation*}
\end{defi}

\begin{remark}
Sub-solutions can be defined in similar ways as in Definitions \ref{definition-of-pointwise-supersolution}, \ref{definition-of-weak-supersolution}, and \ref{dfinition-of-distributional-supersolution}. Also, in the case of Definition \ref{definition-of-weak-supersolution}, by density the test function $\phi$ can be chosen in $H^s_0(\Omega)_+$ if $c$ is somewhat well-behaved (see Lemma \ref{lem:tech} below for more details).
\end{remark}

We are going to denote by $\delta_\Omega(x)=\inf\{|x-\theta|:\theta\in\partial\Omega\}$ for $x\in\Omega$.
The main results of the paper are the following. 

\begin{thm}[Hopf lemma for pointwise super-solutions]\label{hopf-lemma} 
Let $\Omega\subset\R^N$ be an open bounded set with $C^{1,1}$ boundary and $s\in(1/2,1)$.
Let $c\in L^{\infty}(\Omega)$ and let $u:\overline\Omega\rightarrow\R$ be a lower semicontinuous super-solution (in the sense of Definition~\ref{definition-of-pointwise-supersolution}) of \eqref{main-problem-to-study}.
	\begin{enumerate}[label=(\roman*)]
		\item If $c\leq0$ in $\Omega$ and $u\geq0$ on $\partial\Omega$, then either $u$ vanishes identically in $\Omega$ or
		\begin{equation}\label{eq:2}
		\liminf_{\Omega\ni x\rightarrow z}\frac{u(x)}{\delta_{\Omega}(x)^{2s-1}}>0
		\qquad\text{for any }z\in\partial\Omega.
		\end{equation}
		\item If $u\geq0$ in $\overline{\Omega}$, then either $u$ vanishes identically in $\Omega$ or \eqref{eq:2} holds true.
	\end{enumerate}
\end{thm}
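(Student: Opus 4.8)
The plan is to construct an explicit positive subsolution (a barrier) supported near the boundary, against which $u$ can be compared via the maximum principle. The natural candidate is a multiple of $\delta_\Omega(x)^{2s-1}$, truncated to a neighborhood of $\partial\Omega$: one knows from the study of the regional fractional Laplacian on a half-space (or more precisely from the one-dimensional computation $(-\Delta)^s_{(0,\infty)}(x_+^{2s-1})=0$) that $\delta_\Omega^{2s-1}$ is, up to lower-order corrections coming from the curvature of $\partial\Omega$ and the finite size of $\Omega$, a solution rather than merely a subsolution. Thus I expect the barrier to be of the form $w=\eta\,\delta_\Omega^{2s-1}$ with $\eta$ a smooth cutoff equal to $1$ in a thin collar $\{\delta_\Omega<\rho\}$ and vanishing away from it, and then to verify that $(-\Delta)^s_\Omega w \le C$ pointwise in the collar, with the bad term $c(x)u$ and the cutoff-induced error absorbed by making $\rho$ small and adding a genuinely positive subsolution deeper inside $\Omega$.

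First I would reduce to a local statement near a fixed boundary point $z\in\partial\Omega$, using the $C^{1,1}$ regularity to flatten the boundary and compare $\delta_\Omega$ with the distance to the flattened hyperplane up to quadratic error. Second, I would record the key one-dimensional identity and its $N$-dimensional consequence: the function $\delta_\Omega(x)^{2s-1}$ has $(-\Delta)^s_\Omega$ bounded (not just integrable) in a neighborhood of $\partial\Omega$, with the principal-value integral converging precisely because $2s-1>0$ — this is where the hypothesis $s\in(1/2,1)$ is essential. Third, I would establish that $u$ is strictly positive in the interior: under either hypothesis (i) or (ii), $u\ge0$ on the relevant set, and either $u\equiv0$ or, by the strong maximum principle (which should follow from the maximum principle machinery set up earlier for $(-\Delta)^s_\Omega$ together with the connectedness of $\Omega$), $u>0$ everywhere in $\Omega$. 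In particular $u\ge m_\rho>0$ on the inner boundary $\{\delta_\Omega=\rho\}$ of the collar for each fixed $\rho$. Fourth, with $m_\rho$ in hand I would fix the constant in the barrier: set $v=\varepsilon\, w$ where $\varepsilon$ is small enough that $v\le u$ on $\{\delta_\Omega\ge\rho\}$ and on $\partial\Omega$ (using $u\ge0$ there), and small enough that $(-\Delta)^s_\Omega v\le c(x)u$ holds in the collar $\{\delta_\Omega<\rho\}$ — here one uses $u\ge0$ and the fact that $(-\Delta)^s_\Omega v$ is bounded while one may also need to split into the cases $c\le0$ (easy, since then $c(x)u\ge$ a controlled negative quantity, or one simply drops it) and general $c$ bounded with $u\ge0$ (where $c(x)u\ge -\|c\|_\infty u$ and a compactness/Harnack-type lower bound on $u/\delta^{2s-1}$ in compact subsets feeds back in). Then $u-v$ is a supersolution of a linear equation, nonnegative outside the collar and on $\partial\Omega$, so the comparison principle gives $u\ge v=\varepsilon\eta\,\delta_\Omega^{2s-1}$ throughout $\Omega$, whence $\liminf_{x\to z} u(x)/\delta_\Omega(x)^{2s-1}\ge\varepsilon>0$.

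\textbf{Main obstacle.} I expect the technical heart to be the second step: showing that the candidate barrier $\eta\,\delta_\Omega^{2s-1}$ really is a (pointwise) subsolution up to an additive bounded error in the collar. Unlike the restricted fractional Laplacian, where the domain is fixed and $\delta^s$-type barriers are classical, here the operator integrates only over $\Omega$, so the computation must simultaneously handle (a) the singular behavior of $\delta_\Omega^{2s-1}$ near $\partial\Omega$ — its fractional Laplacian is finite but the estimate is delicate since $2s-1$ is close to $0$ when $s$ is close to $1/2$ — (b) the curvature correction from $C^{1,1}$ flattening, which perturbs the flat-half-space computation by terms that must be shown to be of lower order than the leading $O(\delta^{2s-1})$ size one is trying to dominate, and (c) the contribution from the region far from $x$ inside $\Omega$, including where the cutoff $\eta$ transitions, which is bounded but whose sign one does not control and which therefore must be beaten by choosing $\rho$ small. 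Getting all three estimates uniform in $z\in\partial\Omega$ (so that the final $\varepsilon$ can be taken independent of $z$, or at least the $\liminf$ conclusion holds at every $z$) is where the bulk of the work will lie; the comparison/maximum-principle steps, by contrast, should be routine given the framework already in place.
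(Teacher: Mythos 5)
Your proposal and the paper's proof diverge at the choice of barrier and at the mechanism of comparison, and the divergence matters. The paper does not attempt to show that $\delta_\Omega^{2s-1}$ (cut off or not) is an approximate subsolution; instead it takes as barrier the torsion function $u_{tor}$ solving $(-\Delta)^s_\Omega u_{tor}=1$, $u_{tor}=0$ on $\partial\Omega$, and imports from the literature the two-sided estimate $C^{-1}\delta_\Omega^{2s-1}\le u_{tor}\le C\delta_\Omega^{2s-1}$. This outsources entirely the ``main obstacle'' you identify — the delicate half-space/curvature computation of $(-\Delta)^s_\Omega(\eta\,\delta_\Omega^{2s-1})$ near $\partial\Omega$ — which your sketch names but does not resolve. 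It then runs a contradiction argument rather than a direct comparison: setting $v_n=u_{tor}/n$, it supposes $w_n=v_n-u$ attains a positive maximum at some $x_n\in\Omega$ (possible because $w_n$ is upper semicontinuous), shows $x_n\to\partial\Omega$ and $u(x_n)\to 0$, and evaluates $(-\Delta)^s_\Omega u(x_n)$ by splitting $\Omega$ into a fixed compact $K$ (where $u\ge\gamma_3>0$ gives a contribution $\le -\gamma_1\gamma_3<0$ uniformly in $n$) and $\Omega\setminus K$ (where the maximum property $u(x_n)-u(y)\le v_n(x_n)-v_n(y)$ plus $(-\Delta)^s_\Omega v_n=1/n$ pushes the contribution to $0$); since $c(x_n)u(x_n)\to 0$, the supersolution inequality fails for $n$ large.

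Beyond the unproved barrier computation, there is a quantitative sign issue your scheme glosses over: having $(-\Delta)^s_\Omega(\eta\delta_\Omega^{2s-1})\le C$ bounded in the collar is not enough, because you want $(-\Delta)^s_\Omega v\le c(x)u$ there, and in case (ii) the right-hand side can be as negative as $-\|c\|_\infty u(x)$, a quantity vanishing like $\delta_\Omega^{2s-1}$ as $x\to\partial\Omega$. An $O(\varepsilon)$ left-hand side of uncontrolled sign cannot be beaten by making $\varepsilon$ small; you would need $(-\Delta)^s_\Omega(\eta\delta_\Omega^{2s-1})\le 0$, or at least $\le C\delta_\Omega^{2s-1}$, near the boundary — a genuinely stronger statement than boundedness and exactly what the ``up to lower-order corrections'' caveats would have to quantify. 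The paper's maximum-point device sidesteps this because at the contact point $x_n$ the right-hand side $c(x_n)u(x_n)$ is evaluated at a single point and tends to $0$, and the negative $K$-integral term is uniformly bounded away from $0$; no pointwise inequality $(-\Delta)^s_\Omega v\le cu$ is ever required. Finally, a minor remark on step three: your appeal to connectedness for the strong maximum principle is unnecessary — the nonlocality of $(-\Delta)^s_\Omega$ couples all components, and the paper's proof of Proposition~\ref{strong-minimum-principle} works on any bounded open set by producing a strictly negative contribution to $(-\Delta)^s_\Omega u$ at any interior zero from a ball where $u$ is bounded below.
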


\begin{thm}[Hopf lemma for weak super-solutions]\label{hopf-lemma-for-weak-super-solutions}
Let $\Omega\subset\R^N$ be an open bounded set with $C^{1,1}$ boundary and $s\in(1/2,1)$.
	Let $c:\Omega\to\R$ be a measurable function and let $u\in H^s(\Omega)$ be a weak super-solution (in the sense of Definition~\ref{definition-of-weak-supersolution}) of \eqref{main-problem-to-study}.
	Suppose that either
	\begin{align}\label{ass1}
	c\in L^\infty(\Omega)
	\end{align}
	or
	\begin{align}\label{ass2}
	c\in L^q(\Omega),\ q>\frac{N}{2s},
	\qquad\text{and}\qquad 
	u\in L^\infty_{loc}(\Omega),
	\end{align}
	hold.
	\begin{enumerate}[label=(\roman*)]
		\item If $c\leq0$ in $\Omega$ and $u\geq0$ on $\partial\Omega$, then either $u$ vanishes identically in $\Omega$ or
		\begin{equation}\label{c}
		\text{there exists}\quad\eps_0>0 \qquad \text{such that} \qquad 
		\frac{u}{\delta_{\Omega}^{2s-1}}>\eps_0 \quad\text{in }\Omega.
		\end{equation}
		\item If $u\geq0$ in $\Omega$, then either u vanishes identically in $\Omega$ or \eqref{c} holds true.
	\end{enumerate}
\end{thm}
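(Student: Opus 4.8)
The plan is to bring both cases to a common setting by a Dirichlet-form manipulation, then combine interior positivity (strong maximum principle) with a single global boundary barrier via a variational comparison. In case (ii) one has $u\ge0$ in $\Omega$, hence $u\varphi\ge0$ and $\cE(u,\varphi)\ge\int_\Omega cu\varphi\ge-\int_\Omega c^-u\varphi$, so $u$ is also a weak super-solution with coefficient $\tilde c=-c^-\le0$, still satisfying \eqref{ass1} or \eqref{ass2}; moreover, since $s>1/2$ and $\partial\Omega$ is $C^{1,1}$, the trace $H^s(\Omega)\to H^{s-1/2}(\partial\Omega)$ is bounded and preserves signs, so $u\ge0$ on $\partial\Omega$. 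In case (i), testing the super-solution inequality with $u^-\in H^s_0(\Omega)_+$ (admissible by Lemma~\ref{lem:tech}) and using $\cE(u,u^-)=\cE(u^+,u^-)-\cE(u^-,u^-)\le-\cE(u^-,u^-)$ together with $\int_\Omega cuu^-=-\int_\Omega c(u^-)^2\ge0$ forces $\cE(u^-,u^-)=0$, i.e. $u^-$ is a constant of zero trace, so $u\ge0$ in $\Omega$ here as well. It therefore suffices to prove: if $c\le0$ satisfies \eqref{ass1} or \eqref{ass2} and $u\in H^s(\Omega)$ is a weak super-solution with $u\ge0$ on $\overline\Omega$, then $u\equiv0$ or \eqref{c} holds.

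Next I would establish interior positivity. Under \eqref{ass1} or \eqref{ass2}, local De Giorgi--Nash--Moser theory applies to $(-\Delta)^s_\Omega$ on balls $B\subset\subset\Omega$ --- where the operator differs from the restricted fractional Laplacian only by a bounded tail term, since $u\in L^1(\Omega)$ --- so $u$ has a lower semicontinuous representative satisfying a weak Harnack inequality; alternatively one invokes the strong maximum principle for distributional super-solutions proved later in the paper, noting that a weak super-solution is distributional via the integration-by-parts formula. Hence either $u\equiv0$, or $u>0$ in $\Omega$ and, for every $\eta>0$, $m_\eta:=\inf_{K_\eta}u>0$ on $K_\eta:=\{x\in\Omega:\delta_\Omega(x)\ge\eta\}$. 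From now on assume $u\not\equiv0$.

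The crux is the construction of the barrier: I would prove the existence of $\eta>0$ and a bounded $\psi\in H^s_0(\Omega)$ with $\psi\ge0$, $\psi=0$ on $\partial\Omega$, $\psi$ equal to a positive constant on $K_\eta$,
\[ c_1\,\delta_\Omega(x)^{2s-1}\le\psi(x)\le c_2\,\delta_\Omega(x)^{2s-1}\qquad\text{for }x\in\Omega_\eta:=\{0<\delta_\Omega<\eta\}, \]
and which is a weak sub-solution in the layer: $\cE(\psi,\varphi)\le\int_\Omega c\psi\varphi$ for all $0\le\varphi\in C^\infty_c(\Omega_\eta)$. The starting point is the one-dimensional identity $(-\Delta)^s_{(0,\infty)}(t^{2s-1})=0$ for $t>0$, hence $(-\Delta)^s_{\R^N_+}(x_N^{2s-1})=0$, which is exactly why $\delta_\Omega^{2s-1}$ is the correct boundary profile. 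Flattening the boundary shows that near $\partial\Omega$ the quantity $(-\Delta)^s_\Omega(\delta_\Omega^{2s-1})$ equals this vanishing half-space value up to errors controlled by the $C^{1,1}$ character of $\partial\Omega$, while the bulk of $\psi$ (a positive constant at distance $\asymp\eta$ from points of $\Omega_\eta$) contributes a strictly negative amount $\le-\kappa(\eta)<0$; since also $c\psi\le0$, choosing $\eta$ small --- and, if necessary, imposing the sub-solution inequality only on a thinner sub-layer while keeping the profile $\asymp\delta_\Omega^{2s-1}$ out to distance $\eta$ --- lets the negative bulk term dominate the boundary errors. I expect this step to be the main obstacle: the quantitative boundary estimate for $(-\Delta)^s_\Omega\delta_\Omega^{2s-1}$ on a $C^{1,1}$ domain, and the bookkeeping of curvature errors against the bulk term. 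This is the global, uniform version of the local barrier behind the pointwise Theorem~\ref{hopf-lemma}.

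Finally comes the comparison. Since $\psi$ is bounded, $\psi=0$ on $\partial\Omega$, and $u\ge m_\eta$ on $K_\eta$, fix $t>0$ with $t\psi\le u$ on $K_\eta$ and set $w:=u-t\psi\in H^s(\Omega)$. Then $w\ge0$ on $K_\eta$ and, in trace sense, on $\partial\Omega$, so $w^-\in H^s_0(\Omega)_+$ is supported in $\overline{\Omega_\eta}$ and satisfies $0\le w^-\le t\psi\le tc_2\delta_\Omega^{2s-1}\in L^\infty(\Omega)$; in particular all integrals below converge, using $q>N/2s$ and the Sobolev embedding $H^s_0(\Omega)\hookrightarrow L^{2N/(N-2s)}(\Omega)$ in case \eqref{ass2}. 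Subtracting the super-solution inequality for $u$ from the sub-solution inequality for $t\psi$, tested against $\varphi\in C^\infty_c(\Omega_\eta)$ with $\varphi\ge0$, and passing to $w^-$ by density (Lemma~\ref{lem:tech}), gives $\cE(w,w^-)\ge\int_\Omega cww^-$. As in the reduction step, $\cE(w,w^-)\le-\cE(w^-,w^-)$ and $\int_\Omega cww^-=-\int_\Omega c(w^-)^2\ge0$, forcing $\cE(w^-,w^-)=0$, hence $w^-\equiv0$, i.e. $u\ge t\psi$ in $\Omega$. Therefore $u\ge tc_1\delta_\Omega^{2s-1}$ on $\Omega_\eta$ and $u\ge m_\eta\ge m_\eta(\diam\Omega)^{-(2s-1)}\delta_\Omega^{2s-1}$ on $K_\eta$, so \eqref{c} holds with $\eps_0:=\min\{tc_1,\ m_\eta(\diam\Omega)^{-(2s-1)}\}>0$.
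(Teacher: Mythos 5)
Your overall architecture --- interior positivity via the strong maximum principle, a barrier with the $\delta_\Omega^{2s-1}$ boundary profile, and a variational comparison via testing on a negative part --- does match the paper's strategy. But there is a genuine gap, which you yourself flag as ``the main obstacle'': the construction of the barrier $\psi$ is not carried out. You propose to build $\psi$ by flattening the boundary, relying on the half-space identity $(-\Delta)^s_{\R^N_+}(x_N^{2s-1})=0$ and then controlling $C^{1,1}$-curvature errors against a bulk term. This is precisely the quantitative boundary estimate for $(-\Delta)^s_\Omega\delta_\Omega^{2s-1}$ that is delicate for the regional operator (the half-space computation for $(-\Delta)^s_\Omega$ does not transfer to a bounded $C^{1,1}$ domain as smoothly as in the restricted-Laplacian case), and you give no argument for how the error terms are actually dominated. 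Without it the proof is incomplete.

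The paper sidesteps this construction entirely by taking the barrier to be the (scaled) torsion function $v_n=\tfrac1n u_{tor}$, where $u_{tor}$ solves $(-\Delta)^s_\Omega u_{tor}=1$ with zero boundary values. Its existence, uniqueness and the two-sided estimate $C^{-1}\delta_\Omega^{2s-1}\le u_{tor}\le C\delta_\Omega^{2s-1}$ on a $C^{1,1}$ domain are taken from the literature (\eqref{eq:8}, with references \cite{bonforte2018sharp,chen2018dirichlet}), so no new boundary computation is needed. Note also that $v_n$ is not a sub-solution of the homogeneous equation (it satisfies $(-\Delta)^s_\Omega v_n=\tfrac1n$), so the paper cannot run the ``single comparison forces $w^-\equiv0$'' step you propose; instead it considers $w_n=v_n-u$ for a sequence $n\to\infty$, tests on $w_n^+$, and obtains the contradiction from the fact that the cross term $\cE(w_n^-,w_n^+)$ contributes a term $\approx -C_0c_{N,s}\,\epsilon\,\|w_n^+\|_{L^1(\Omega)}$ which, for $n$ large, dominates both the $\tfrac1n\|w_n^+\|_{L^1}$ contribution coming from $(-\Delta)^s_\Omega v_n$ and the right-hand side involving $c$. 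So your case reduction and interior-positivity steps are sound and your final energy computation is structurally similar, but the missing barrier is the heart of the matter, and the paper's resolution is to replace your hard analytic construction with a soft appeal to known torsion-function estimates together with a limiting argument along a shrinking sequence of super-solutions.
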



Let us first comment on the proof of Theorem \ref{hopf-lemma}. Starting with a strong maximum principle, we obtain the strict positivity of non-trivial super-solutions of \eqref{main-problem-to-study}: this is where the lower semicontinuity of $u$ is needed. In a next step, we construct a barrier from below for $u$ in terms of the torsion function $u_{tor}$, \textit{i.e.}, the solution to the boundary value problem 
\begin{align}\label{torsion-problem-for-regional}
\left\{\begin{aligned}
(-\Delta)^s_{\Omega}u_{tor}&=1 && \text{in }\Omega, \\
u_{tor}&=0 && \text{on }\partial\Omega.
\end{aligned}\right.
\end{align}
This function is known to satisfy, on smooth domains, the double-sided estimate
\begin{equation}\label{eq:8}
C^{-1}\delta_{\Omega}^{2s-1}\leq u_{tor}\leq C\delta_{\Omega}^{2s-1} 
\qquad\text{in }\Omega
\end{equation}
for some $C>1$, see \cite{bonforte2018sharp,chen2018dirichlet}
which are based on some estimates in \cite{bogdan2003censored,chen2002green,guan2006reflected}. Intuitively, \eqref{eq:8} gives that the boundary behaviour of super-solutions described by \eqref{eq:2} and \eqref{c} is optimal.
We notice that, in contrast to what happens for the fractional Laplacian, there are no explicit examples of torsion functions for the regional fractional Laplacian, even in the case when $\Omega$ is a ball. In \cite{duo2019comparative}, a numerical analysis is performed in the one-dimensional case $\Omega=(-1,1)$.  

We mention that the existence and uniqueness of pointwise and weak solutions to the Dirichlet problem \eqref{torsion-problem-for-regional} with general bounded right-hand side was obtained in \cite{chen2018dirichlet}. 
We notice also that the H\"older regularity up to the boundary of any weak solution of \eqref{torsion-problem-for-regional} was recently proved in \cite{fall2020regional}, while regularity up to the boundary of pointwise solution of \eqref{torsion-problem-for-regional} was obtained earlier in \cite{chen2018dirichlet}. We also mention that the boundary regularity of the ratio $u_{tor}/\delta_{\Omega}^{2s-1}$ has been established in \cite{fall2020regional} in the case when $\Omega$ is of class $C^{1,\beta}$ for some $\beta>0$. Thus, it makes sense to evaluate $u_{tor}/\delta_{\Omega}^{2s-1}$ pointwisely on $\overline{\Omega}$ .

The proof of Theorem \ref{hopf-lemma-for-weak-super-solutions} follows the same line of thought as the one of Theorem \ref{hopf-lemma}, although with some more technical difficulties due to the weak character of super-solutions involved. For example, when $c\in L^q(\Omega)$ the strong maximum principle involved in our strategy takes the following form. 

\begin{prop}[Strong maximum principle for distributional super-solutions]\label{strong-max-principle-distribution-regional}
	Let $\Omega\subset\R^N$ be a bounded open set and $u\in L^\infty_{\mathrm{loc}}(\Omega)$ 
	be a distributional super-solution (in the sense of Definition \ref{dfinition-of-distributional-supersolution}) of \eqref{main-problem-to-study} with 
	\begin{align}\label{q-hyp}
c\in L^q_{loc}(\Omega),\quad q>\frac{N}{2s}.
	\end{align}
	If $u\geq0$ in $\Omega$, then either $u$ vanishes identically in $\Omega$ or
	\begin{align*}
	\underset{K}{\essinf}\; u>0\qquad\text{for any }K\subset\subset\Omega.
	\end{align*}
\end{prop}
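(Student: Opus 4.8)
The plan is to prove Proposition~\ref{strong-max-principle-distribution-regional} by a contradiction/connectedness argument based on a local mean-value-type inequality for distributional super-solutions. Suppose $u\not\equiv0$ in $\Omega$ and yet $\essinf_K u=0$ for some $K\subset\subset\Omega$; then there is a point $x_0$ in the interior of $\Omega$ and a small ball $B=B_{2r}(x_0)\subset\subset\Omega$ on which $\essinf_B u=0$. The key step is to show that for a suitable nonnegative test function $\phi$ supported in $B_{2r}(x_0)$ one has, from the distributional inequality $\int_\Omega u\,(-\Delta)^s_\Omega\phi\ge\int_\Omega c u\phi$, a pointwise-a.e. lower bound of the form
\begin{equation*}
\fint_{B_r(x_0)} u \;\le\; C(r)\, u(z) \qquad\text{for a.e. }z\in\Omega\setminus B_{2r}(x_0),
\end{equation*}
or more precisely a version that lets me transfer positivity across $\Omega$. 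Concretely, I would test against $\phi=(-\Delta)^{-s}_\Omega$-solutions or, more elementarily, note that since $(-\Delta)^s_\Omega\phi(z)$ for $z$ outside $\supp\phi$ equals $-c_{N,s}\int \phi(y)|z-y|^{-N-2s}\,dy<0$, the inequality forces $u$ to be controlled near the boundary of $\supp\phi$ by its average inside. The role of the hypothesis $c\in L^q_{loc}$, $q>N/(2s)$, together with $u\in L^\infty_{loc}$, is that the zeroth-order term $cu$ is locally in $L^q$, hence the contribution $\int cu\phi$ is a harmless lower-order perturbation: by choosing $r$ small enough relative to $\|c\|_{L^q(B_{2r})}$ one absorbs it, exactly as in the classical De Giorgi--Nash--Moser framework.

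The cleanest route I would take is the following. First, by standard arguments one reduces to the case $c\le0$ near the point in question: since $u\ge0$, the term $cu$ satisfies $cu\le c^+ u$ and $c^+\in L^q_{loc}$; one can either incorporate $c$ into a weight or use the fact that on a small enough ball the operator norm of multiplication by $c$ into the relevant dual Sobolev space is small. With $c\le0$, $u$ is a distributional super-solution of $(-\Delta)^s_\Omega u\ge0$. Then I would prove the following local assertion: if $u\ge0$ in $\Omega$, $u\in L^\infty_{loc}$, and $(-\Delta)^s_\Omega u\ge0$ in $\cD'(B_{2r}(x_0))$ with $B_{2r}(x_0)\subset\subset\Omega$, and if $\essinf_{B_r(x_0)}u=0$, then $u\equiv0$ on $B_{2r}(x_0)$. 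To get this I would regularize: take a mollification $u_\rho=u*\eta_\rho$, which still satisfies $(-\Delta)^s_\Omega u_\rho\ge$ (something small) in the classical sense on a slightly smaller ball — here one must be a little careful because $(-\Delta)^s_\Omega$ does not commute with convolution, but the commutator is controlled since the singular kernel is translation-invariant and only the domain of integration $\Omega$ (not $\R^N$) enters, and $B_{2r}(x_0)$ is well inside $\Omega$. On $u_\rho$ one runs the pointwise strong maximum principle (the same non-local argument as for $(-\Delta)^s$: if $u_\rho(x_1)=\min u_\rho=0$ at an interior minimum then $(-\Delta)^s_\Omega u_\rho(x_1)=c_{N,s}\int_\Omega \frac{-u_\rho(y)}{|x_1-y|^{N+2s}}dy\le0$, forcing $u_\rho\equiv0$ on $\Omega$, contradiction unless $u_\rho\equiv0$), then let $\rho\to0$. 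Finally I would use connectedness: the set $\{x\in\Omega : u=0 \text{ a.e. in a neighbourhood of }x\}$ is open by construction, and the previous step shows it is also closed in $\Omega$; since $\Omega$ is connected and the set is nonempty by assumption, $u\equiv0$ a.e.\ in $\Omega$, contradicting $u\not\equiv0$. Hence $\essinf_K u>0$ for every $K\subset\subset\Omega$.

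The main obstacle I anticipate is making the regularization step rigorous: the operator $(-\Delta)^s_\Omega$ is \emph{not} translation-invariant (it integrates only over $\Omega$), so $(-\Delta)^s_\Omega(u*\eta_\rho)\ne((-\Delta)^s_\Omega u)*\eta_\rho$ and one must estimate the error term. However, for a mollifier supported in $B_\rho$ with $\rho$ small compared to $\dist(x_0,\partial\Omega)$, the discrepancy comes only from the ``boundary strip'' $\{y\in\Omega : \dist(y,\partial\Omega)<\rho\}$, on which $u$ is merely $L^1$ but the kernel $|x-y|^{-N-2s}$ is bounded (uniformly for $x$ near $x_0$); so the error tends to $0$ in $L^1_{loc}$ near $x_0$, which is enough to pass to the limit in the sign condition. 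An alternative, avoiding mollification entirely, is to work purely with the distributional formulation and a carefully chosen sequence of test functions: one takes $\phi_n$ approximating the (distributional) solution of $(-\Delta)^s_\Omega\phi_n=\mathbbm{1}_{B_{r}(x_0)}$ with zero exterior/boundary data, uses the known regularity and positivity of such torsion-type functions (as quoted in the excerpt around \eqref{eq:8}), and deduces $\int_{B_r(x_0)} u\le C\,\langle u, (-\Delta)^s_\Omega\phi_n\rangle$-type bounds that force $\int_{B_r} u=0$, hence $u=0$ a.e.\ on $B_r(x_0)$; then connectedness finishes as above. Either way, the heart of the matter is the interplay between the non-locality of the operator, the local boundedness of $u$, and the subcritical integrability $q>N/(2s)$ of $c$ that renders the zeroth-order term subordinate.
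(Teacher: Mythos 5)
The central difficulty with your plan is the step where you try to turn $\essinf_{B_r(x_0)}u=0$ into ``$u_\rho$ attains a zero interior minimum.'' For a nonnegative $L^\infty_{loc}$ function, $\essinf_{B_r}u=0$ does \emph{not} imply that $u$ (let alone the mollification $u_\rho=u*\eta_\rho$) attains the value $0$ anywhere: one could have $u>0$ a.e.\ with $\essinf u=0$, and then $u_\rho(x)=\int u(x-y)\eta_\rho(y)\,dy>0$ at every point. Your argument ``if $u_\rho(x_1)=\min u_\rho=0$ then $(-\Delta)^s_\Omega u_\rho(x_1)<0$, contradiction'' therefore never gets off the ground, because the interior minimum need not be $0$. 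You would need a quantitative version — control of the size of $u_\rho$ at its interior minimum by a vanishing quantity — but that is precisely the information that is missing, and the pointwise strong maximum principle (Proposition~\ref{strong-minimum-principle} of the paper) cannot supply it. The connectedness step thus rests on a local assertion that the sketch does not actually prove. The alternative you suggest (testing against the solution of $(-\Delta)^s_\Omega\phi_n=\mathbbm 1_{B_r(x_0)}$ and deducing $\int_{B_r}u=0$) has the same issue: $\essinf_{B_r}u=0$ is much weaker than $\int_{B_r}u=0$, and no step you describe bridges this gap.

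There is also a secondary but important point you flag but do not resolve: $(-\Delta)^s_\Omega$ does \emph{not} commute with mollification. The paper sidesteps this cleanly by writing $(-\Delta)^s_\Omega\psi=(-\Delta)^s\psi-\kappa_\Omega\psi$ with $\kappa_\Omega(x)=c_{N,s}\int_{\R^N\setminus\Omega}|x-y|^{-N-2s}dy$ and extending $u$ by $0$ outside $\overline\Omega$; the distributional inequality then becomes $(-\Delta)^s u\geq(c+\kappa_\Omega)u$ in $\cD'(\Omega)$, and $(-\Delta)^s$ \emph{is} translation invariant, so mollification commutes with it exactly — no error term to estimate. You would do well to adopt this reduction. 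The paper's actual proof then proceeds very differently from yours: it uses the Green/Poisson representation of $(-\Delta)^s$ on balls applied to $u*\eta_\eps$, passes to the limit $\eps\to0$ (Fatou for the Poisson part, weak $L^p$-convergence for the Green part), negates the conclusion to produce points $x_j$ and radii $r_j\to0$ with $(2r_j)^{-N}\int_{B_{2r_j}(x_j)}u\to0$, and then integrates the representation inequality over shrinking balls $B_{\rho_j}(x_j)$. The zeroth-order term $(c+\kappa_\Omega)u$ is controlled via H\"older and the Hardy--Littlewood maximal function — this is exactly where $c\in L^q_{loc}$ with $q>N/(2s)$ and $u\in L^\infty_{loc}$ enter — while the Poisson term contributes a uniform strictly positive multiple of $r_j^{2s}$ thanks to $|\{u>\delta\}|>0$; comparing the two yields the contradiction. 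That quantitative, averaged argument is essential here; the pointwise-minimum route you propose does not survive the passage from ``essential infimum is zero'' to ``the function vanishes.''
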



The paper is organized as follows. In Section \ref{section:preliminaries}, we present some notations and definitions. Section \ref{section:the case of pointwise solutions} is devoted to the proof of Theorem \ref{hopf-lemma}, whereas in Section \ref{section:the case of weak solutions} we prove Theorem \ref{hopf-lemma-for-weak-super-solutions}. Finally, in Section \ref{section:the case of distributional solutions} we prove Proposition \ref{strong-max-principle-distribution-regional}.
\bigskip

\textbf{Acknowledgements:} This work is supported by DAAD and BMBF (Germany) within project 57385104. The first and second author were supported by the Alexander von Humboldt Foundation. 
The authors would also like to thank Tobias Weth and Sven Jarohs for valuable discussions.

\section{Preliminaries}\label{section:preliminaries}
We collect in this section some notations and useful tools. 
For $s\in (0,1)$, $H^s(\Omega)$ denotes the space of functions $u\in L^2(\Omega)$ such that
\begin{equation*}
[u]^2_{H^s(\Omega)}:=\frac{c_{N,s}}{2}\int_{\Omega}\int_{\Omega}\frac{\big(u(x)-u(y)\big)^2}{{|x-y|}^{N+2s}}\; dx\;dy<\infty.
\end{equation*}
It is a Hilbert space endowed with the norm
\begin{equation*}
\|u\|_{H^s(\Omega)}:=\big(\|u\|^2_{L^2(\Omega)}+[u]^2_{H^s(\Omega)}\big)^{1/2}.
\end{equation*}
We denote by $H^s_0(\Omega)$ the completion of $C^{\infty}_c(\Omega)$ with respect to the norm $\|\cdot\|_{H^s(\Omega)}$. It is known that for $s\in(1/2,1)$, $H^s_0(\Omega)$ is a Hilbert space with the norm $\|\cdot\|_{H^s_0(\Omega)}=[\cdot]_{H^s(\Omega)}$ (which is equivalent to the usual one in $H^s(\Omega)$ thanks to a Poincar\'{e}-type inequality) and it can be characterized as follows
\begin{equation*}
H^s_0(\Omega):=\big\{u\in H^s(\Omega):u=0\ \text{on}\ \partial\Omega\big\}.
\end{equation*}
Next, we define $H^s_0(\Omega)_+$ by
\begin{equation*}
H^s_0(\Omega)_+:=\big\{u\in H^s_0(\Omega):u\geq0\ \text{in}\ \Omega\big\}.
\end{equation*}
For $u,v\in H^s_0(\Omega)$, we consider the symmetric, continuous, and coercive bilinear form
\begin{equation*}
\cE(u,v):=\frac{c_{N,s}}{2}\int_{\Omega}\int_{\Omega}\frac{\big(u(x)-u(y)\big)\big(v(x)-v(y)\big)}{{|x-y|}^{N+2s}}\;dx\;dy.
\end{equation*}
The first Dirichlet eigenvalue of $(-\Delta)^s_\Omega$ in $\Omega$ can be defined by
\begin{equation}\label{first-dirichlet-eigenvalue-of-regional}
\lambda_1(\Omega)=\min_{\substack{u\in H^s_0(\Omega)\\ u\neq0}}\frac{\cE(u,u)}{\|u\|^2_{L^2(\Omega)}}.
\end{equation}
It holds $\lambda_1(\Omega)>0$, with the corresponding eigenfunction unique and strictly positive in $\Omega$.

Given $x\in\Omega$ and $r>0$, we denote by $B_r(x)$ the open ball centred at $x$ with radius $r$. 
We denote by $u^+:=\max\{u,0\}$ and $u^-:=\max\{-u,0\}$ the positive and negative part of $u$ respectively. We also recall that, if $u\in H^s(\Omega)$, then $u^+,u^-\in H^s(\Omega)$ as well: this follows from a simple calculation, indeed $u=u^+-u^-$ and
\[
[u]_{H^s(\Omega)}^2=\cE(u,u)=\cE(u^+,u^+)-2\cE(u^+,u^-)+\cE(u^-,u^-)
\]
where
\begin{multline*}
\cE(u^+,u^-)=\frac{c_{N,s}}{2}\int_{\Omega}\int_{\Omega}\frac{\big(u^+(x)-u^+(y)\big)\big(u^-(x)-u^-(y)\big)}{{|x-y|}^{N+2s}}\;dx\;dy=\\
=-c_{N,s}\int_{\Omega}\int_{\Omega}\frac{u^+(x)\,u^-(y)}{{|x-y|}^{N+2s}}\;dx\;dy\leq 0.
\end{multline*}

\section{Proof of the Hopf lemma: the case of pointwise super-solutions}\label{section:the case of pointwise solutions}

The aim of this section is to prove Theorem \ref{hopf-lemma}. Before doing this, we need one key result: we state and prove a strong maximum principle for pointwise super-solutions of \eqref{main-problem-to-study}.

\begin{prop}[Strong maximum principle for pointwise super-solutions]\label{strong-minimum-principle}
Let $\Omega\subset\R^N$ be a bounded open set.
Let $c\in L^{\infty}(\Omega)$ and $u:\overline{\Omega}\rightarrow\R$ be a lower semicontinuous function super-solution (in the sense of Definition \ref{definition-of-pointwise-supersolution}) of \eqref{main-problem-to-study}.
	\begin{enumerate}[label=(\roman*)]
		\item If $c\leq0$ in $\Omega$ and $u\geq0$ on $\partial\Omega$, then either $u$ vanishes identically in $\Omega$, or $u>0$ in $\Omega$.
		\item If $u\geq0$ in $\overline{\Omega}$, then either $u$ vanishes identically in $\Omega$, or $u>0$ in $\Omega$.
	\end{enumerate}
\end{prop}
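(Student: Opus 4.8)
The plan is to argue by contradiction, relying on the elementary fact that at a global minimum point $x_0\in\Omega$ of $u$ the operator value $(-\Delta)^s_\Omega u(x_0)$ is nonpositive, together with the strict positivity of the kernel $|x-y|^{-N-2s}$ on $\Omega\times\Omega$. The key preliminary observation is: if $u$ attains its minimum over $\overline\Omega$ at some $x_0\in\Omega$, then the integrand $|x_0-y|^{-N-2s}\big(u(x_0)-u(y)\big)$ in \eqref{def-regional} is $\le0$ for every $y\in\Omega$, so the principal value at $x_0$ coincides with the Lebesgue integral $c_{N,s}\int_\Omega|x_0-y|^{-N-2s}\big(u(x_0)-u(y)\big)\,dy\in[-\infty,0]$, which is moreover finite since $u$ is a pointwise super-solution. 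In particular $(-\Delta)^s_\Omega u(x_0)\le0$, with equality if and only if $u\equiv u(x_0)$ a.e.\ in $\Omega$.

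\textbf{Step 1 ($u\ge0$ in $\Omega$).} In case (ii) this is assumed, so suppose we are in case (i): $c\le0$ in $\Omega$ and $u\ge0$ on $\partial\Omega$. Being lower semicontinuous on the compact set $\overline\Omega$, $u$ attains $m:=\min_{\overline\Omega}u$ at some $x_0\in\overline\Omega$. If $x_0\in\partial\Omega$, then $m=u(x_0)\ge0$ and we are done. If $x_0\in\Omega$, assume for contradiction $m<0$; then $c(x_0)u(x_0)=c(x_0)m\ge0$ because $c(x_0)\le0$, so the super-solution inequality and the preliminary observation give $0\le(-\Delta)^s_\Omega u(x_0)\le0$, whence $u\equiv m$ a.e.\ in $\Omega$ and, by continuity of $u$ in $\Omega$, $u\equiv m$ in $\Omega$. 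Choosing $z\in\partial\Omega$ and a sequence $\Omega\ni x_n\to z$, lower semicontinuity yields $0\le u(z)\le\liminf_n u(x_n)=m<0$, a contradiction. Thus $m\ge0$.

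\textbf{Step 2 (dichotomy).} Suppose $u$ does not vanish identically in $\Omega$ and, for contradiction, that $u(x_0)=0$ for some $x_0\in\Omega$. By Step 1, $x_0$ is a global minimum of $u$, so the preliminary observation gives $(-\Delta)^s_\Omega u(x_0)=-c_{N,s}\int_\Omega|x_0-y|^{-N-2s}u(y)\,dy\le0$. Since $c(x_0)u(x_0)=0$, the super-solution inequality forces this integral to vanish; as the kernel is strictly positive, $u=0$ a.e.\ in $\Omega$, hence $u\equiv0$ in $\Omega$ by continuity, contradicting the assumption. Therefore $u>0$ in $\Omega$.

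\textbf{On the main difficulty.} The argument is essentially soft; what needs care is the bookkeeping with lower semicontinuity at $\partial\Omega$ in Step 1 (in particular, excluding a negative interior minimum), and the verification that the principal value at a minimum point is an absolutely convergent Lebesgue integral, so that its vanishing genuinely forces $u$ to be constant. I do not expect a real obstacle here. The structural reason the proof works is that, in contrast with $(-\Delta)^s$, the operator $(-\Delta)^s_\Omega$ only involves the values of $u$ inside $\Omega$, so positivity of the kernel on $\Omega\times\Omega$ is all that is needed and no regularity of $\partial\Omega$ enters this proposition.
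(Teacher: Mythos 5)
Your proof is correct and takes essentially the same approach as the paper: both exploit the strict positivity of the kernel $|x-y|^{-N-2s}$ at an interior minimum point to contradict the super-solution inequality, and both use lower semicontinuity on $\overline\Omega$ to exclude a constant negative value. The only stylistic difference is that you package the dichotomy ``$(-\Delta)^s_\Omega u\le0$ at an interior minimum, with equality iff $u$ is a.e.\ constant'' as a single preliminary observation and then invoke the equality case in both steps, whereas the paper argues strict negativity of $(-\Delta)^s_\Omega u$ directly each time (in Step 2 by first exhibiting a ball where $u\ge\epsilon_1>0$); the substance is identical.
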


\begin{proof}
Before going into the proof, we start by proving that the function $u$ is nonnegative in $\overline{\Omega}$ as long as the hypotheses of assertion $(i)$ are satisfied.

Let us assume that $c\leq0$ in $\Omega$, $u\geq0$ on $\partial\Omega$, and that $u$ does not vanish identically on $\Omega$. Then we claim that 
\begin{equation}\label{claim}
u\geq0\quad\text{in}\quad\overline{\Omega}.
\end{equation}
Assume to the contrary that \eqref{claim} does not hold, that is, $u$ is negative somewhere in $\Omega$. Then, using that $\overline{\Omega}$ is compact together with the hypotheses of lower semicontinuity of $u$, a negative minimum of the function $u$ must be achieved in $\Omega$. In other words, there exists $x_0\in\Omega$ such that
\begin{equation}\label{negaitiv-minimum}
u(x_0)=\min_{x\in\Omega}u(x)<0.
\end{equation}
Combining \eqref{negaitiv-minimum} with $u\geq0$ on $\partial\Omega$, it follows that
\begin{equation*}
(-\Delta)^s_{\Omega}u(x_0)= c_{N,s}P.V.\int_{\Omega}\frac{u(x_0)-u(y)}{{|x_0-y|}^{N+2s}}\;dy<0.
\end{equation*}
But, since by assumption $c(x_0)\leq0$, we have that $c(x_0)u(x_0)\geq0$.
Therefore
\begin{align*}
0>(-\Delta)^s_\Omega u(x_0)\geq c(x_0)u(x_0)\geq 0.
\end{align*} 
which is a contradiction. Consequently, claim \eqref{claim} follows.

So we can now suppose $u\geq 0$ in $\Omega$. Suppose that $u\not\equiv0$ in $\Omega$ and let us prove that
\begin{equation}\label{eq:4}
u>0\qquad\text{in }\Omega.
\end{equation}
First of all, we recall that by the lower semicontinuity of $u$, there exist $x_1\in\Omega$ and $\epsilon_1,r>0$ such that
\begin{equation*} 
u(y)\geq\epsilon_1\qquad\text{for all}\quad y\in B_r(x_1)\subset\Omega.
\end{equation*}
If the inequality \eqref{eq:4} were not true, that is, if $u(\tilde{x})=0$ at some $\tilde{x}\in\Omega$, then it would hold
\begin{equation*}
(-\Delta)^s_{\Omega}u(\tilde{x})=c_{N,s}P.V.\int_{\Omega}\frac{-u(y)}{{|\tilde{x}-y|}^{N+2s}}\;dy
\leq c_{N,s}P.V.\int_{B_r(x_1)}\frac{-u(y)}{{|\tilde{x}-y|}^{N+2s}}\;dy<0
\end{equation*}
Therefore 
\begin{align*}
0>(-\Delta)^s_\Omega u(\tilde{x})\geq c(\tilde{x})u(\tilde{x})=0,
\end{align*}
a contradiction. Thus, the strict inequality $u>0$ in $\Omega$ must hold true. 
\end{proof}

Having the above strong maximum principle, we can now give the proof of Theorem \ref{hopf-lemma} by following some ideas in \cite{greco2016hopf}.

\begin{proof}[Proof of Theorem \ref{hopf-lemma}]
	From Proposition \ref{strong-minimum-principle} it follows that
	\begin{equation}\label{eq:5}
	u(x)>0\qquad\text{for all }x\in\Omega
	\end{equation}
 provided that $u$ does not vanish identically in $\Omega$. In other words, if $u$ does not vanish identically in $\Omega$, then for every compact subset $K\subset\Omega$ we have
\begin{equation}\label{eq:6}
\inf_{y\in K}u(y)>0.
\end{equation}
Now suppose that $u$ does not vanish identically in $\Omega$ and let us prove \eqref{eq:2}. To this end, it suffices to construct a barrier for $u$ in terms of the solution problem \eqref{torsion-problem-for-regional}. Let $u_{tor}$ denote the pointwise solution of \eqref{torsion-problem-for-regional}.

Next, for $n\in\N$, we set
\begin{equation}\label{barrier}
v_n(x)=\frac{1}{n}u_{tor}(x)\qquad\text{for }x\in\overline{\Omega}.
\end{equation}
Then, by definition and \eqref{eq:8}, by the boundedness of $\Omega$ it follows that 
\begin{equation}\label{eq:11}
v_n\rightarrow0\qquad\text{uniformly in }\overline{\Omega}\text{ as }n\to\infty.
\end{equation}
We wish now to show that there exists some $n_0\in\N$ such that
\begin{equation}\label{eq:10}
u\geq v_n\qquad\text{in }\overline{\Omega},\text{ for any }n\geq n_0.
\end{equation}
In order to prove \eqref{eq:10}, we argue by contradiction: suppose that for every $n\in\N$ the function $w_n$ defined by
\begin{equation*}
w_n:=v_n-u \qquad \text{in }\overline{\Omega}
\end{equation*}
is positive somewhere in $\overline{\Omega}$. Then, using that $w_n=v_n-u=-u\leq0$ on $\partial\Omega$ and the compactness of $\overline{\Omega}$, a positive maximum of the upper semicontinuous function $w_n$ (since $u$ is lower semicontinuous by assumption) must be achieved at some $x_n\in\Omega$, that is, there exists $x_n\in\Omega$ such that
\begin{equation}\label{eq:11'}
w_n(x_n)=\max_{x\in\Omega}w_n(x)>0.
\end{equation}
This implies together with \eqref{eq:5} that $0<u(x_n)<v_n(x_n)$. From this and thanks to \eqref{eq:11}, we find that
\begin{equation}\label{eq:12}
\lim\limits_{n\rightarrow\infty}u(x_n)=0.
\end{equation}
Recalling \eqref{eq:6}, we deduce from \eqref{eq:12} that $x_n\rightarrow\partial\Omega$ as $n\rightarrow\infty$. Taking this into account, one deduces that for any compact set $K\subset\Omega$ there exists $h>0$ such that $|x_n-y|\geq h>0$ for any $y\in K$ and $n$ sufficiently large. As a direct consequence, there exist two positive constants $\gamma_1,\gamma_2>0$, independent of $n$ such that
\begin{equation}\label{eq:13}
\gamma_1<\int_{K}\frac{dy}{{|x_n-y|}^{N+2s}}<\gamma_2\qquad\text{for $n$ sufficiently large (depending on $K$)}.
\end{equation}	
Thus we have 
\begin{equation}\label{eq:14}
c(x_n)u(x_n)\leq (-\Delta)^s_{\Omega}u(x_n)=c_{N,s}\int_{K}\frac{u(x_n)-u(y)}{{|x_n-y|}^{N+2s}}\;dy+c_{N,s}P.V.\int_{\Omega\setminus K}\frac{u(x_n)-u(y)}{{|x_n-y|}^{N+2s}}\;dy.
\end{equation}
We now aim at estimating the integrals on the right-hand side of the above inequality. Concerning the first integral, we notice that by \eqref{eq:6}, there exists a positive constant $\gamma_3>0$ such that $u(y)\geq\gamma_3$ for $y\in K$. As a consequence of this and by using \eqref{eq:12} and \eqref{eq:13}, it follows that
\begin{equation}\label{eq:15}
\limsup_{n\rightarrow\infty}\int_{K}\frac{u(x_n)-u(y)}{{|x_n-y|}^{N+2s}}\;dy\leq-\gamma_1\gamma_3<0.
\end{equation}
Regarding the second integral in \eqref{eq:14}, we first recall that since $x_n$ is the maximum of $w_n$ in $\overline{\Omega}$, then by \eqref{eq:11'} 
\begin{equation*}
u(x_n)-u(y)\leq v_n(x_n)-v_n(y).
\end{equation*}
Using this, the second integral in \eqref{eq:14} can be estimated as follows:
\begin{equation}\label{eq:16}
P.V.\int_{\Omega\setminus K}\frac{u(x_n)-u(y)}{{|x_n-y|}^{N+2s}}\ dy\leq P.V.\int_{\Omega\setminus K}\frac{v_n(x_n)-v_n(y)}{{|x_n-y|}^{N+2s}}\;dy.
\end{equation}
Moreover, a simple calculation yields
\begin{equation}\label{eq:17}
c_{N,s}P.V.\int_{\Omega\setminus K}\frac{v_n(x_n)-v_n(y)}{{|x_n-y|}^{N+2s}}\;dy=(-\Delta)^s_{\Omega}v_n(x_n)-c_{N,s}\int_{K}\frac{v_n(x_n)-v_n(y)}{{|x_n-y|}^{N+2s}}\;dy.
\end{equation}
Now, from \eqref{barrier} and \eqref{torsion-problem-for-regional}, it follows that
\begin{equation*}
(-\Delta)^s_{\Omega}v_n(x_n)=\frac{1}{n}(-\Delta)^s_{\Omega}u_{tor}(x_n)=\frac{1}{n}.
\end{equation*}
This yields
\begin{equation}\label{eq:13'}
(-\Delta)^s_{\Omega}v_n(x_n)\rightarrow0\quad\text{as}\quad n\rightarrow\infty.
\end{equation}
Combining \eqref{eq:13'}, \eqref{eq:11} and \eqref{eq:13}, we observe that the right-hand side in the equality \eqref{eq:17} goes to zero as $n\rightarrow\infty$ and therefore
\begin{equation*}
\lim_{n\rightarrow\infty}P.V.\int_{\Omega\setminus K}\frac{v_n(x_n)-v_n(y)}{{|x_n-y|}^{N+2s}}\;dy=0.
\end{equation*}
Consequently, from \eqref{eq:16}, we get
\begin{equation}\label{eq:19}
\limsup_{n\rightarrow\infty}P.V.\int_{\Omega\setminus K}\frac{u(x_n)-u(y)}{{|x_n-y|}^{N+2s}}\;dy\leq0.
\end{equation}
However, using that $c$ is bounded, it follows from \eqref{eq:12} that
\begin{equation}\label{eq:20}
\lim\limits_{n\rightarrow\infty}c(x_n)u(x_n)=0.
\end{equation} 
Finally, \eqref{eq:19} and \eqref{eq:15} into \eqref{eq:14}, lead to a contradiction with \eqref{eq:20}. Therefore, the inequality \eqref{eq:10} follows for some $n\in\N$ large enough.
\end{proof}

\section{Proof of the Hopf lemma: the case of weak super-solutions}\label{section:the case of weak solutions}
In this section, we aim at proving Theorem \ref{hopf-lemma-for-weak-super-solutions}. Here, the function $u_{tor}$ defined via \eqref{torsion-problem-for-regional} above is understood to be a weak solution. 
Recall the double-sided estimate \eqref{eq:8}. 
We first state and prove a technical lemma and a strong maximum principle for weak super-solutions of \eqref{main-problem-to-study}.

\begin{lemma}\label{lem:tech}
Let $\Omega\subset\R^N$ be an open bounded set and $c\in L^{\frac{N}{2s}}(\Omega)$. Then $u$ is a weak super-solution (in the sense of Definition \ref{definition-of-weak-supersolution}) of \eqref{main-problem-to-study} if and only if 
\begin{align*}
\cE(u,v)\geq\int_\Omega cuv \qquad\text{for any }v\in H^s_0(\Omega)_+.
\end{align*}
\end{lemma}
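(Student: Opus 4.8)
The assertion is an equivalence, and only the ``only if'' direction carries content: since $C^\infty_c(\Omega)\subseteq H^s_0(\Omega)$, if $\cE(u,v)\geq\int_\Omega cuv$ holds for every $v\in H^s_0(\Omega)_+$ then in particular it holds for every nonnegative $v\in C^\infty_c(\Omega)$, which is exactly Definition~\ref{definition-of-weak-supersolution} (with $u\in H^s(\Omega)$ understood throughout). For the converse the plan is: (1) show that, for fixed $u\in H^s(\Omega)$, both $v\mapsto\cE(u,v)$ and $v\mapsto\int_\Omega cuv$ are continuous linear functionals on $H^s_0(\Omega)$; (2) show that $C^\infty_c(\Omega)_+$ is dense in $H^s_0(\Omega)_+$ for the $H^s(\Omega)$-norm; then for $v\in H^s_0(\Omega)_+$ one takes $\phi_j\in C^\infty_c(\Omega)_+$ with $\phi_j\to v$ in $H^s(\Omega)$, applies the super-solution inequality to each $\phi_j$, and lets $j\to\infty$. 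Step (1) for $\cE$ is just Cauchy--Schwarz. For the other functional one uses the hypothesis $c\in L^{N/(2s)}(\Omega)$: by the fractional Sobolev embedding $H^s_0(\Omega)\hookrightarrow L^{2N/(N-2s)}(\Omega)$ (recall $N\geq2>2s$) and Hölder's inequality with exponents $\tfrac{N}{2s},\tfrac{2N}{N-2s},\tfrac{2N}{N-2s}$, whose reciprocals add up to $\tfrac{2s}{N}+\tfrac{N-2s}{2N}+\tfrac{N-2s}{2N}=1$,
\[
\Big|\int_\Omega cuv\Big|\leq\|c\|_{L^{N/(2s)}(\Omega)}\,\|u\|_{L^{2N/(N-2s)}(\Omega)}\,\|v\|_{L^{2N/(N-2s)}(\Omega)}\leq C\,\|c\|_{L^{N/(2s)}(\Omega)}\,\|u\|_{H^s(\Omega)}\,[v]_{H^s(\Omega)},
\]
so the functional is bounded on $H^s_0(\Omega)$.

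For step (2), given $v\in H^s_0(\Omega)_+$, pick by definition of $H^s_0(\Omega)$ a sequence $\psi_k\in C^\infty_c(\Omega)$ with $\psi_k\to v$ in $H^s(\Omega)$, and replace $\psi_k$ by its positive part $\psi_k^+$. Each $\psi_k^+$ is nonnegative, Lipschitz, and vanishes near $\partial\Omega$ (since $\supp\psi_k\subset\subset\Omega$), hence belongs to $H^s(\Omega)$ (this is the computation recalled in Section~\ref{section:preliminaries}); moreover, as explained below, $\psi_k^+\to v^+=v$ in $H^s(\Omega)$. Extending $\psi_k^+$ by zero and mollifying, $\psi_k^+\ast\rho_\eps\in C^\infty_c(\Omega)_+$ for $\eps$ small, and $\psi_k^+\ast\rho_\eps\to\psi_k^+$ in $H^s(\R^N)$, hence in $H^s(\Omega)$, as $\eps\to0$. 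A diagonal choice $\eps=\eps_k\to0$ produces $\phi_k:=\psi_k^+\ast\rho_{\eps_k}\in C^\infty_c(\Omega)_+$ with $\phi_k\to v$ in $H^s(\Omega)$. (This is simply the standard proof that $C^\infty_c(\Omega)$ is dense in $H^s_0(\Omega)$, performed so as to preserve nonnegativity.)

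The one genuinely non-formal point, and thus the main obstacle, is the \emph{continuity} of the positive-part map $w\mapsto w^+$ on $H^s(\Omega)$; boundedness $[w^+]_{H^s(\Omega)}\leq[w]_{H^s(\Omega)}$ is elementary, as it follows from $|a^+-b^+|\leq|a-b|$. To prove continuity it suffices, by the subsequence principle, to show that if $w_k\to w$ in $H^s(\Omega)$ then some subsequence has $w_k^+\to w^+$ in $H^s(\Omega)$. Along a subsequence one has $w_k\to w$ a.e.\ in $\Omega$, and the difference quotients $g_k(x,y):=\big(w_k(x)-w_k(y)\big)|x-y|^{-(N+2s)/2}$ converge in $L^2(\Omega\times\Omega)$ and admit a common dominating function $G\in L^2(\Omega\times\Omega)$ (a standard consequence of $L^2$-convergence). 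Since the difference quotients of $w_k^+$ are pointwise bounded by $|g_k|\leq G$ and converge a.e.\ to those of $w^+$, dominated convergence yields $[w_k^+-w^+]_{H^s(\Omega)}\to0$; combined with $\|w_k^+-w^+\|_{L^2(\Omega)}\leq\|w_k-w\|_{L^2(\Omega)}\to0$ this gives $w_k^+\to w^+$ in $H^s(\Omega)$ along the subsequence, as needed. All remaining ingredients are Sobolev embedding, Hölder's inequality, and standard properties of mollifiers.
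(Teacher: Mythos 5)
Your proof is correct and follows essentially the same route as the paper's: reduce the non-trivial direction to approximating $v\in H^s_0(\Omega)_+$ by nonnegative functions in $C^\infty_c(\Omega)$ and passing to the limit using Cauchy--Schwarz for $\cE$ and Sobolev embedding plus H\"older (with $c\in L^{N/(2s)}$) for $\int_\Omega cuv$. You go further than the paper in one respect: you actually prove the density of $C^\infty_c(\Omega)_+$ in $H^s_0(\Omega)_+$ (truncate by positive part, establish continuity of $w\mapsto w^+$ on $H^s(\Omega)$ via dominated convergence on the difference quotients, then mollify), a fact the paper merely asserts when it introduces the approximating sequence $(\psi_n)$.
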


\begin{proof}
Fix $v\in H^s_0(\Omega)_+$ and let ${(\psi_n)}_{n\in\N}\subset C^\infty_c(\Omega)$ a sequence of nonnegative functions converging to $v$ in the $H^s(\Omega)$-norm. By Definition \ref{definition-of-weak-supersolution} we have
\begin{align*}
\cE(u,\psi_n)\geq\int_\Omega cu\psi_n
\qquad \text{for any }n\in\N.
\end{align*}
On the left-hand side we have the convergence $\cE(u,\psi_n)\to\cE(u,v)$ as $n\to\infty$ by construction; so, let us deal with right-hand side. By the Sobolev embedding we have $\psi_n\to v$ as $n\to\infty$ in $L^{2^*_s}(\Omega)$, with $2^*_s=\frac{2N}{N-2s}$. So, we have the convergence 
\begin{align*}
\int_\Omega cu\psi_n\longrightarrow\int_\Omega cuv\qquad\text{as }n\to\infty,
\end{align*}
if $cu\in L^{\frac{2N}{N+2s}}(\Omega)$ where $\frac{2N}{N+2s}=(2^*_s)'$ is the conjugate exponent of $2^*_s$, which is what we show next. This indeed follows from the H\"older inequality:
\begin{align*}
\int_\Omega\big|cu\big|^{\frac{2N}{N+2s}}=
\bigg(\int_\Omega\big|c\big|^{\frac{N}{2s}}\bigg)^{\frac{4s}{N+2s}}
\bigg(\int_\Omega\big|u\big|^{\frac{2N}{N-2s}}\bigg)^{\frac{N-2s}{N+2s}}<\infty.
\end{align*}
Then
\begin{align*}
\cE(u,v)=\lim_{n\to\infty}\cE(u,\psi_n)\geq\lim_{n\to\infty}\int_\Omega cu\psi_n=\int_\Omega cuv.
\end{align*}
\end{proof}

\begin{prop}[Strong maximum principle for weak super-solutions]\label{strong maximum principle}
	Let $c\in L^q(\Omega)$, with $q>\frac{N}{2s}$, and $u\in H^s(\Omega)\cap L^\infty_{loc}(\Omega)$  be a weak super-solution of
	\begin{equation}\label{e}
	(-\Delta)^s_{\Omega}u=c(x)u\qquad\text{in }\Omega.
	\end{equation}
	\begin{enumerate}[label=(\roman*)]
		\item If $c\leq0$ in $\Omega$ and $u\geq0$ on $\partial\Omega$, then either $u$ vanishes identically in $\Omega$ or $u>0$ in $\Omega$.
		\item If $u\geq0$ in $\Omega$, then either $u$ vanishes identically in $\Omega$ or $u>0$ in $\Omega$.
	\end{enumerate}
\end{prop}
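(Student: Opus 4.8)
\textbf{Proof proposal for Proposition~\ref{strong maximum principle}.}
The plan is to mirror the structure of Proposition~\ref{strong-minimum-principle} but working with the bilinear form $\cE$ in place of the pointwise operator, and to exploit Lemma~\ref{lem:tech} and the distributional strong maximum principle Proposition~\ref{strong-max-principle-distribution-regional} to upgrade local positivity to everywhere positivity. First I would dispose of (i) by reducing it to the hypotheses of (ii): assuming $c\le0$, $u\ge0$ on $\partial\Omega$ and $u\not\equiv0$, I test the weak inequality with $v=u^-\in H^s_0(\Omega)_+$ (which is a legitimate test function by Lemma~\ref{lem:tech}, since $u\ge0$ on $\partial\Omega$ forces $u^-\in H^s_0(\Omega)$, and $c\in L^q\subset L^{N/2s}$). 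Using the decomposition $\cE(u,u^-)=\cE(u^+,u^-)-\cE(u^-,u^-)$ together with the sign fact $\cE(u^+,u^-)\le0$ recalled in Section~\ref{section:preliminaries}, the left-hand side is $\le -[u^-]_{H^s(\Omega)}^2$, while the right-hand side $\int_\Omega c\,u\,u^- = -\int_\Omega c\,(u^-)^2\ge0$ because $c\le0$. Hence $[u^-]_{H^s(\Omega)}^2\le0$, so $u^-\equiv0$ and $u\ge0$ in $\Omega$; this brings us into the setting of part (ii).

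So it remains to prove (ii): given $u\in H^s(\Omega)\cap L^\infty_{loc}(\Omega)$ a nonnegative weak super-solution with $u\not\equiv0$, show $u>0$ in $\Omega$. The key observation is that a weak super-solution in the sense of Definition~\ref{definition-of-weak-supersolution} is automatically a distributional super-solution in the sense of Definition~\ref{dfinition-of-distributional-supersolution}: for $\phi\in C^\infty_c(\Omega)$ with $\phi\ge0$ one has, by the symmetry of the kernel and Fubini (justified since $u\in L^2(\Omega)\subset L^1(\Omega)$ on the bounded set $\Omega$ and $(-\Delta)^s_\Omega\phi\in L^\infty(\Omega)$ by the smoothness and compact support of $\phi$), the identity $\cE(u,\phi)=\int_\Omega u\,(-\Delta)^s_\Omega\phi$. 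Combined with the weak inequality $\cE(u,\phi)\ge\int_\Omega c u\phi$, this yields $\int_\Omega u\,(-\Delta)^s_\Omega\phi\ge\int_\Omega c u\phi$ for all such $\phi$, i.e. $u$ is a distributional super-solution. Since moreover $c\in L^q_{loc}(\Omega)$ with $q>N/2s$ and $u\in L^\infty_{loc}(\Omega)$, Proposition~\ref{strong-max-principle-distribution-regional} applies and gives $\essinf_K u>0$ for every $K\subset\subset\Omega$ (the alternative $u\equiv0$ being excluded).

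Finally I would upgrade this $L^\infty$-statement to the pointwise statement $u>0$ in $\Omega$. Fix $x_0\in\Omega$; I want $u(x_0)>0$. The strategy is to use local boundedness and the equation to control the oscillation of $u$ near $x_0$, or alternatively to invoke the interior regularity for weak solutions (of the type in \cite{fall2020regional}) ensuring $u$ is continuous in $\Omega$, whence $\essinf$ over a small closed ball around $x_0$ equals the genuine infimum and is positive. I expect this last point — passing from ``$\essinf$ on compacta is positive'' to ``$u>0$ pointwise'' — to be the main technical obstacle, since it requires a continuity (or at least lower-semicontinuity) representative of $u$, and I would handle it by citing the available interior Hölder regularity for weak super-solutions of $(-\Delta)^s_\Omega u = f$ with $f=cu\in L^q_{loc}$, $q>N/2s$; with that in hand the conclusion is immediate. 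If one only wants the essential-infimum formulation (which is what is actually needed later for Theorem~\ref{hopf-lemma-for-weak-super-solutions} via the barrier argument), then the proof ends already at the previous paragraph.
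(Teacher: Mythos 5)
Your proposal matches the paper's proof essentially step for step: part (i) is disposed of by testing the weak inequality with $u^-\in H^s_0(\Omega)_+$ (your use of $\cE(u^+,u^-)\le0$ together with $u=u^+-u^-$ is exactly equivalent to the paper's elementary pointwise inequality \eqref{good-inequality}, and your appeal to the Poincar\'e inequality to pass from $[u^-]_{H^s(\Omega)}=0$ to $u^-\equiv0$ plays the same role as the paper's use of $\lambda_1(\Omega)>0$), and part (ii) is handled, as in the paper, by observing that a weak super-solution with $c\in L^q$, $q>N/2s$, is also a distributional super-solution via symmetry of the kernel and Fubini, and then invoking Proposition~\ref{strong-max-principle-distribution-regional}. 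The one point where you go beyond the paper is in flagging that Proposition~\ref{strong-max-principle-distribution-regional} only yields $\underset{K}{\essinf}\,u>0$ for $K\subset\subset\Omega$ rather than a genuinely pointwise bound; the paper tacitly reads ``$u>0$ in $\Omega$'' in this essential-infimum sense and, as you correctly note, that is precisely the form in which the conclusion is used in the proof of Theorem~\ref{hopf-lemma-for-weak-super-solutions}, so no continuity upgrade is actually required.
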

\begin{proof}
	We first recall the following elementary inequality:
	\begin{equation}\label{good-inequality}
	\big(u(x)-u(y)\big)\big(u^{-}(x)-u^{-}(y)\big)\leq-\big(u^{-}(x)-u^{-}(y)\big)^2,
	\qquad\text{for any }x,y\in\Omega.
	\end{equation}
	Assume then $c\leq0$ in $\Omega$ and $u\geq0$ on $\partial\Omega$. Then $u^-=0$ on $\partial\Omega$. Moreover, by standard arguments, we also know $u^-\in H^s(\Omega)$. Therefore $u^{-}\in H^s_0(\Omega)_+$. Hence, by testing \eqref{e} on $u^-$ (which is allowed by Lemma \ref{lem:tech}), we have from inequality \eqref{good-inequality} that
	\begin{align*}
	\int_{\Omega}c(x)u(x)u^{-}(x)\;dx\leq\cE(u,u^-)\leq-\cE(u^-,u^-).
	\end{align*}
	Moreover, $u=u^{+}-u^-$ with $u^{+}u^{-}\equiv 0$ in $\Omega$, which yields
	\begin{align*}
	\int_{\Omega}c(x)u^{-}(x)^2\;dx\geq\cE(u^-,u^-)\geq\lambda_1(\Omega)\|u^-\|^2_{L^2(\Omega)},
	\end{align*}
	where $\lambda_1(\Omega)$ has been defined in \eqref{first-dirichlet-eigenvalue-of-regional}. Since $\lambda_1(\Omega)>0$, then from the nonpositivity of $c$ it  follows 
	\begin{equation*}
	\|u^-\|^2_{L^2(\Omega)}=0
	\end{equation*}
	implying that $u^-=0$ a.e. in $\Omega$, that is, $u\geq0$ a.e. in $\Omega$. 
	
	So we can at this point assume that $u\geq 0$ in $\Omega$. Note that the fact that $u$ is a weak super-solution implies in particular that $u$ is also a distributional super-solution. Indeed, for any $\psi\in\ C^\infty_c(\Omega)$, $\psi\geq 0$ in $\Omega$, 
	\begin{align*}
	\int_\Omega cu\psi\leq\cE(u,\psi)&=\frac{c_{N,s}}{2}\int_\Omega\int_\Omega\frac{\big(u(x)-u(y)\big)\big(\psi(x)-\psi(y)\big)}{{|x-y|}^{N+2s}}\;dx;dy \\
	&=c_{N,s}\int_\Omega u(x)\,P.V.\int_\Omega\frac{\psi(x)-\psi(y)}{{|x-y|}^{N+2s}}\;dy\;dx=\int_\Omega u(-\Delta)^s_\Omega\psi.
	\end{align*}
	Using this remark, we can use Proposition \ref{strong-max-principle-distribution-regional}.
\end{proof}

\begin{remark}\label{addendum strong maximum principle}
It is possible to drop the assumption $u\in L^\infty_{loc}(\Omega)$ in Proposition \ref{strong maximum principle} by paying the price of assuming $c\in L^\infty(\Omega)$. In this case, the first part of the proof still holds, while, instead of using Proposition \ref{strong-max-principle-distribution-regional}, the second part simply follows from \cite[Theorem 1.2]{jarohs2019strong}.
\end{remark}

We now prove Theorem \ref{hopf-lemma-for-weak-super-solutions}. For the sake of clarity, we split its proof into two different arguments.
\begin{proof}[Proof of Theorem \ref{hopf-lemma-for-weak-super-solutions} under assumption \eqref{ass1}]
Suppose that $u$ does not vanish identically in $\Omega$ and let us prove \eqref{c}. In other words, we want to prove that there exists a positive constant $C>0$ such that
\begin{equation}\label{lower-bound}
u\geq C\delta_{\Omega}^{2s-1}\qquad\text{in }\Omega.
\end{equation}
From Proposition \ref{strong maximum principle} and Remark \ref{addendum strong maximum principle} it follows that $u>0$ in $\Omega$. This means that for any $K\subset\subset\Omega$ there exists $\epsilon>0$ such that it holds
\begin{equation}\label{c0}
u(x)\geq\epsilon>0\qquad\text{for }x\in K.
\end{equation}
Now, let $w_n:=v_n-u$ where $v_n$ is the function defined in \eqref{barrier}. Then, thanks to \eqref{eq:11} and \eqref{c0}, we can assume without any ambiguity that
\begin{equation}\label{c1}
w_n^+\equiv0 \qquad \text{in }K\quad\text{for $n$ sufficiently large}.
\end{equation}
Now, since $w_n^+\in H^s_0(\Omega)_+$ (because $w_n^+\geq0$ in $\Omega$, $w_n^+\in H^s(\Omega)$ since $w_n$ is, and $w_n^+=0$ on $\partial\Omega$ since $v_n=0$ on $\partial\Omega$ and $u\geq0$ on $\partial\Omega$), one can use it as a test function in Definition \ref{definition-of-weak-supersolution} (by Lemma \ref{lem:tech}) in order to have
\begin{align}\label{a}
\cE(u,w_n^+)\geq\int_{\Omega}cuw_n^+. 
\end{align}
Since in $\{w_n^+>0\}$ it holds 
\[
u<v_n\leq\frac1n\|u_{tor}\|_{L^\infty(\Omega)},
\]
we have
\begin{align}\label{c2}
\int_{\Omega}c(x)u(x)w_n^+(x)\;dx\geq -\frac1n\|c\|_{L^\infty(\Omega)}\|u_{tor}\|_{L^\infty(\Omega)}\|w_n^+\|_{L^1(\Omega)}.
\end{align}
On the other hand,
\begin{multline*}
\cE(u,w^+_n)=\cE(u-v_n,w^+_n)+\cE(v_n,w^+_n)=\cE(-w_n,w^+_n)+\frac{1}{n}\cE(u_{tor},w^+_n)=\\
=-\cE(w_n^+,w^+_n)+\cE(w^-_n,w^+_n)+\frac{1}{n}\cE(u_{tor},w^+_n).
\end{multline*}
Since the first term on the right-hand side of the above equality is nonpositive and $\cE(u_{tor},w^+_n)=\int_{\Omega}w^+_n=\|w^+_n\|_{L^1(\Omega)}$ thanks to \eqref{torsion-problem-for-regional}, then
\begin{align}\label{c4}
\cE(u,w^+_n)\leq\cE(w^-_n,w^+_n)+\frac{1}{n}\|w^+_n\|_{L^1(\Omega)}.
\end{align}
Now, 
\begin{align}\label{c5}
\cE(w^-_n,w^+_n)
=-c_{N,s}\int_\Omega\int_\Omega\frac{w_n^-(x)w_n^+(y)}{{|x-y|}^{N+2s}}\;dx\;dy.
\end{align}
Recall that, by definition (see also \eqref{c1}), $K\subset\{w_n<0\}=\{w_n^->0\}$ and 
\[
w_n^-\geq\epsilon-\frac1n\|u_{tor}\|_{L^\infty(\Omega)}
\qquad\text{in }\Omega,
\]
so, upon plugging this into \eqref{c5}, we obtain 
\begin{align*}
\cE(w^-_n,w^+_n) & \leq -c_{N,s}\int_\Omega\int_K\frac{w_n^-(x)w_n^+(y)}{{|x-y|}^{N+2s}}\;dx\;dy \\
& \leq c_{N,s}\bigg(\frac1n\|u_{tor}\|_{L^\infty(\Omega)}-\epsilon\bigg)\int_\Omega\int_K\frac{w_n^+(y)}{{|x-y|}^{N+2s}}\;dx\;dy \\
& \leq C_0c_{N,s}\bigg(\frac1n\|u_{tor}\|_{L^\infty(\Omega)}-\epsilon\bigg)\|w_n^+\|_{L^1(\Omega)}\end{align*}
for some $C_0>0$ and $n$ sufficiently large. Plugging \eqref{c2} into \eqref{a}, using \eqref{c4} and this last obtained inequality, we get
\begin{multline}\label{c8}
C_0c_{N,s}\bigg(\frac1n\|u_{tor}\|_{L^\infty(\Omega)}-\epsilon\bigg)\|w_n^+\|_{L^1(\Omega)}
+\frac{1}{n}\|w^+_n\|_{L^1(\Omega)} \geq \\
\geq -\frac1n\|c\|_{L^\infty(\Omega)}\|u_{tor}\|_{L^\infty(\Omega)}\|w_n^+\|_{L^1(\Omega)}.
\end{multline}
For $n$ sufficiently large, we deduce from \eqref{c8} that
\begin{equation*}
w^+_n\equiv0\qquad\text{in }\Omega.
\end{equation*}
Therefore, \eqref{lower-bound} follows. 
\end{proof}

\begin{proof}[Proof of Theorem \ref{hopf-lemma-for-weak-super-solutions} under assumption \eqref{ass2}]
The very first part of the proof follows the argument given above. We start here from \eqref{a}. We know from Proposition \ref{strong maximum principle} that $u>0$ in $\Omega$ and so $w_n<v_n$, from which it follows 
\begin{align*}
\int_\Omega cuw_n^+\geq-\frac1n\|u_{tor}\|_{L^\infty(\Omega)}\int_\Omega|cu|.
\end{align*}
By the fractional Sobolev inequality we have that $u\in L^p(\Omega)$ for any $1\leq p\leq 2^*_s=2N/(N-2s)$. As the conjugate exponent of $2N/(N-2s)$ is $2N/(N+2s)$ which is smaller than $N/(2s)$, we have by an application of the H\"older's inequality that 
\begin{align*}
\int_\Omega cuw_n^+\geq-\frac1n\|u_{tor}\|_{L^\infty(\Omega)}\|u\|_{L^{2^*_s}(\Omega)}\|c\|_{L^q(\Omega)}.
\end{align*}
By repeating the calculations in the preceding argument we then get the analog of \eqref{c8} which reads in this case
\begin{multline*}
C_0c_{N,s}\bigg(\frac1n\|u_{tor}\|_{L^\infty(\Omega)}-\epsilon\bigg)\|w_n^+\|_{L^1(\Omega)}
+\frac{1}{n}\|w^+_n\|_{L^1(\Omega)} \geq \\
\geq -\frac1n\|u_{tor}\|_{L^\infty(\Omega)}\|u\|_{L^{2^*_s}(\Omega)}\|c\|_{L^q(\Omega)}.
\end{multline*}
This last inequality, for $n$ sufficiently large, gives
\begin{equation*}
w^+_n\equiv0\qquad\text{in }\Omega.
\end{equation*}
Therefore, \eqref{lower-bound} follows also in this case. 
\end{proof}

\section{Proof of the strong maximum principle for distributional super-solutions}\label{section:the case of distributional solutions}
This last section is devoted to the proof of Proposition \ref{strong-max-principle-distribution-regional}. In the following, we assume that $u:\Omega\to\R$ is a distributional super-solution (in the sense of Definition \ref{dfinition-of-distributional-supersolution}) of \eqref{main-problem-to-study} and that $c$ satisfies the assumptions in \eqref{q-hyp}.

%

\subsection{Regional v. restricted fractional Laplacian}
Note that 
\begin{align*}
(-\Delta)^s_\Omega\psi=(-\Delta)^s\psi-\kappa_\Omega\psi\quad\text{in }\Omega,
\qquad\kappa_\Omega(x)=c_{N,s}\int_{\R^N\setminus\Omega}\frac{dy}{{|x-y|}^{N+2s}} \quad \text{for }x\in\Omega,
\end{align*}
where we recall \eqref{def-fractional},
so that Definition \ref{dfinition-of-distributional-supersolution} is equivalent to (if we extend $u=0$ in $\R^N\setminus\overline{\Omega}$)
\begin{align}\label{eq2}
(-\Delta)^s u\geq \big(c+\kappa_\Omega\big)u \quad\text{in }\cD'(\Omega),
\qquad c\in L^q_{\text{loc}}(\Omega),\ q>\frac{N}{2s}.
\end{align}

\subsection{Approximation and representation of distributional solutions}

Consider a solution $u:\R^N\to[0,+\infty)$ to \eqref{eq2} with
\begin{align}\label{z0}
u\in L^\alpha_{\text{loc}}(\Omega),\qquad \alpha>\frac{Nq}{2sq-N},~
\text{and}\quad u=0 \text{ in }\R^N\setminus\overline{\Omega}.
\end{align}
Take $\eta_\eps\in C^\infty_c(B_\eps)$ a mollifier. If we take an open $\Omega'\subset\subset\Omega$ then for any $\psi\in C^\infty_c(\Omega'),\ \psi\geq 0,$ it holds $\psi*\eta_\eps\in C^\infty_c(\Omega)$ for $\eps$ small independently of $\psi$ and we can say
\begin{multline*}
\int_{\R^N}\big(u*\eta_\eps\big)(-\Delta)^s\psi=\int_{\R^N}u\big(\eta_\eps*(-\Delta)^s\psi\big)=\int_\Omega u(-\Delta)^s\big(\psi*\eta_\eps)\geq\int_\Omega\big(c+\kappa_\Omega\big)u\big(\psi*\eta_\eps\big) 
=\\= 
\int_\Omega\Big[\Big(\big(c+\kappa_\Omega\big)u\Big)*\eta_\eps\Big]\psi
\end{multline*}
which implies that
\begin{align*} 
(-\Delta)^s\big(u*\eta_\eps\big)\geq\Big(\big(c+\kappa_\Omega\big)u\Big)*\eta_\eps 
\qquad \text{in }\Omega'.
\end{align*}
As $u*\eta_\eps\in C^\infty(\overline{\Omega'})$, the above inequality also holds in a pointwise sense.
We can then exploit a Green representation on $u*\eta_\eps$ (see \cite{bucur2016some}) to deduce that for any $x\in\Omega''\subset\subset\Omega'$ and $0<r<\dist(\Omega'',\R^N\setminus\Omega')$
\begin{multline}\label{p}
\big(u*\eta_\eps\big)(x)\geq r^{2s}\int_{B_1}G(0,y)\Big[\Big(\big(c+\kappa_\Omega\big)u\Big)*\eta_\eps\Big](x+ry)\;dy\;+\\
+\int_{\R^N\setminus B_1}P(0,y)\big(u*\eta_\eps\big)(x+ry)\;dy.
\end{multline}
Here we have used the kernels $G$ and $P$ which are respectively the Green function and the Poisson  kernel of the fractional Laplacian $(-\Delta)^s$ on the unitary ball $B_1$, which are explicitly known, see \cite{bucur2016some}:
\begin{align*}
G(x,y)&=\frac{k_{N,s}}{{|x-y|}^{N-2s}}\int_0^{\frac{(1-|x|^2)(1-|y|^2)}{|x-y|^2}}\frac{t^{s-1}}{(t+1)^{N/2}}\;dt && x,y\in B_1, \\
P(x,y)&=\frac{\gamma_{N,s}}{|x-y|^N}\bigg(\frac{1-|x|^2}{|y|^2-1}\bigg)^s && x\in B_1,\ y\in\R^N\setminus B_1.
\end{align*}

From now on, we assume that $u\geq0$ in $\Omega$. We want to send $\eps\to 0$ in \eqref{p} and deduce a representation for $u$.
For the Poisson integral we use the nonnegativity of $u$ and the Fatou's Lemma to say
\begin{align*}
\liminf_{\eps\to 0}\int_{\R^N\setminus B_1}P(0,y)\big(u*\eta_\eps\big)(x+ry)\;dy
\geq \int_{\R^N\setminus B_1}P(0,y)\,u(x+ry)\;dy.
\end{align*}
For the Green integral we use that
\begin{align*}
G(0,\cdot)\in L^p(B_1)\qquad\text{for any }p\in\Big[1,\frac{N}{N-2s}\Big)
\end{align*}
and
\begin{multline*}
\Big\|\Big(\big(c+\kappa_\Omega\big)u\Big)*\eta_\eps\Big\|_{L^\beta(\Omega')}\leq
C\Big\|\big(c+\kappa_\Omega\big)u\Big\|_{L^\beta(\Omega')}\leq 
C\|cu\|_{L^\beta(\Omega')}+C\big\|\kappa_\Omega\big\|_{L^\infty(\Omega')} \|u\|_{L^\alpha(\Omega')} \\
\text{for any }\beta\in\Big(\frac{N}{2s},\alpha\Big)
\end{multline*}
where, moreover, by the H\"{o}lder inequality
\begin{align*}
\int_{\Omega'}{|cu|}^\beta \leq 
\|c\|^{\frac1\beta}_{L^q(\Omega')}\big\|u^\beta\big\|_{L^{q/(q-\beta)}(\Omega')} &\qquad
\text{for }\ \frac{N}{2s}<\beta<q, \\
\int_{\Omega'}u^{\beta q/(q-\beta)}<\infty & \qquad 
\text{for }\ \frac{\beta q}{q-\beta}<\alpha,
\end{align*}
where the second inequality holds for $\beta$ close to $\frac{N}{2s}$ in view of \eqref{z0}.
Therefore, using the weak topology in Lebesgue spaces,
\begin{multline*}
\lim_{\eps\to 0}\int_{B_1}G(0,y)\Big[\Big(\big(c+\kappa_\Omega\big)u\Big)*\eta_\eps\Big](x+ry)\;dy=\\=\int_{B_1}G(0,y)\,\big(c+\kappa_\Omega\big)(x+ry)\,u(x+ry)\;dy.
\end{multline*}
Thus
\begin{multline}\label{repr}
u(x)\geq r^{2s}\int_{B_1}G(0,y)\,\big(c+\kappa_\Omega\big)(x+ry)\,u(x+ry)\;dy+
\int_{\R^N\setminus B_1}P(0,y)\,u(x+ry)\;dy \\
\text{for a.e. }x\in\Omega''.
\end{multline}

\subsection{The Hardy-Littlewood maximal function}
Recall that, given $f\in L^p_{\text{loc}}(\R^N)$, $p>1$, the Hardy-Littlewood maximal function is defined as
\begin{align}\label{hl}
\mathbf{M}[f](x)=\sup_{r>0}\frac1{r^N}\int_{B_r(x)}|f|
\qquad x\in\R^N.
\end{align}
In the following we are going to need the following fact
\begin{align}\label{hl-ineq}
\big\|\mathbf{M}[f]\big\|_{L^p(K)}\leq C\|f\|_{L^p(K)}
\qquad\text{for }p>1\text{ and }K\subset\subset\R^N\text{ measurable}.
\end{align}

\subsection{The strong maximum principle}
Having the above ingredients, in this subsection, we are ready to give the proof of Proposition \ref{strong-max-principle-distribution-regional}.

\begin{proof}[Proof of Proposition \ref{strong-max-principle-distribution-regional}]
We argue by contradiction.
	Assume that $|\{u>\delta\}\cap\Omega'|>0$ for some $\delta>0$.
	
	In the notations of the previous subsection, and without loss of generality, we assume that
	\begin{align*}
	&\text{there exist}\ \ {(x_j)}_{j\in\N}\subset\Omega'' \ \text{ and }\ \ {(r_j)}_{j\in\N}\subset(0,\infty),\ r_j\to 0\text{ as } j\to\infty,\\ 
	&\text{ such that }\ \lim_{j\to\infty}\frac1{(2r_j)^N}\int_{B_{2r_j}(x_j)}u=0.
	\end{align*}
	Without loss of generality, we can assume that ${(r_j)}_{j\in\N}$ is decreasing.
	Extract a subsequence ${(\rho_j)}_{j\in\N}\subset{(r_j)}_{j\in\N}$ in such a way that\footnote{Here we briefly comment on inequality \eqref{rhoj}. As we know by assumption that $\frac{1}{(2r_j)^N}\int_{B_{2r_j}(x_j)}u\rightarrow0$ as $j\to\infty$, one has also $\frac{2^N}{(2r_j)^N}\int_{B_{2r_j}(x_j)}u\rightarrow0$ as $j\to\infty$. Now, using that $B_{r_j}(x_j)\subset B_{2r_j}(x_j)$ and that $u$ is nonnegative, one can write
\[
0\leq\frac1{r_j^N}\int_{B_{r_j}(x_j)}u\leq\frac{2^N}{(2r_j)^N}\int_{B_{2r_j}(x_j)}u\longrightarrow 0\qquad\text{as }j\to\infty. 
\]
One can then extract a subsequence $(\rho_j)_{j\in\N}\subset(r_j)_{j\in\N}$ with $\rho_j\leq r_j$ such that \eqref{rhoj} holds.
}
	\begin{align}\label{rhoj}
	\frac1{\rho_j^N}\int_{B_{\rho_j}(x_j)}u\leq\frac{r_j^{2s}}{j}
	\qquad\text{and}\qquad 
	\rho_j\leq r_j
	\qquad\text{for any }j\in\N.
	\end{align}
	In order to ease notation, relabel $c_\Omega=c+\kappa_\Omega$.
	We apply representation \eqref{repr} with $r=r_j$ and we then integrate it over $B_{\rho_j}(x_j)$,
	obtaining
	\begin{multline}\label{to be estimated}
	\frac1{\rho_j^N}\int_{B_{\rho_j}(x_j)}u\geq \frac{r_j^{2s}}{\rho_j^N}\int_{B_1}G(0,y)\int_{B_{\rho_j}(x_j)}c_\Omega(x+r_jy)\,u(x+r_jy)\;dx\;dy\;+\\
	+\frac1{\rho_j^N}\int_{\R^N\setminus B_1}P(0,y)\int_{B_{\rho_j}(x_j)}u(x+r_jy)\;dx\;dy.
	\end{multline}
	
	The Poisson integral can be estimated as follows:
	\begin{align*}
	\int_{\R^N\setminus B_1}P(0,y)\,u(x+r_jy)\;dy &=
	\gamma_{N,s}\int_{\R^N\setminus B_1}\frac{u(x+r_jy)}{{|y|}^N\big(|y|^2-1\big)^s}\;dy \\
	&\geq
	\gamma_{N,s}r_j^{2s}\int_{\Omega'\setminus B_{r_j}(x)}\frac{u(y)}{{|y-x|}^N\big(|y-x|^2-r_j^2\big)^s}\;dy\\
	&\geq 
	Cr_j^{2s}\int_{\Omega'\setminus B_{r_j}(x)}u
	\end{align*}
	which entails
	\begin{align}\label{poisson-est}
	\frac1{\rho_j^N}\int_{\R^N\setminus B_1}P(0,y)\int_{B_{\rho_j}(x_j)}u(x+r_jy)\;dx\;dy
	\geq Cr_j^{2s}
	\end{align}
	for some $C>0$. Mind that here we have used the assumption that $|\{u>\delta\}\cap\Omega'|>0$ for some $\delta>0$.
	
	We now deal with the Green integral in \eqref{to be estimated}.
	Fix $p\in(1,\min\{q,N/(N-2s)\})$. We estimate
	\begin{align}
	& \frac1{\rho_j^N}\int_{B_1}G(0,y)\int_{B_{\rho_j}(x_j)}c_\Omega(x+r_jy)\,u(x+r_jy)\;dx\;dy\geq \nonumber \\
	& \geq -\frac{C}{\rho_j^N}\int_{B_1}|y|^{2s-N}\int_{B_{\rho_j}(x_j)}|c_\Omega(x+r_jy)|\,u(x+r_jy)\;dx\;dy \nonumber \\
	& \geq -C\int_{B_1}|y|^{2s-N}\bigg(\frac1{\rho_j^N}\int_{B_{\rho_j}(x_j)}|c_\Omega(x+r_jy)|^{\overline{q}}\;dx\bigg)^\frac1{\overline{q}}\times \nonumber \\
	& \qquad\times\bigg(\frac1{\rho_j^N}\int_{B_{\rho_j}(x_j)}u(x+r_jy)^{\frac{\overline{q}}{\overline{q}-1}}\;dx\bigg)^\frac{\overline{q}-1}{\overline{q}}\;dy \nonumber \\
	&\geq
	-C\bigg[\int_{B_1}|y|^{(2s-N)p}\bigg(\frac1{\rho_j^N}\int_{B_{\rho_j}(x_j)}|c_\Omega(x+r_jy)|^{\overline{q}}\;dx\bigg)^\frac{p}{\overline{q}}\;dy\bigg]^{\frac1p}\;\times \label{1111111}\\
	&\qquad\times\;\bigg[\int_{B_1}\bigg(\frac1{\rho_j^N}\int_{B_{\rho_j}(x_j)}|u(x+r_jy)|^{\frac{\overline{q}}{\overline{q}-1}}\;dx\bigg)^{\frac{p}{p-1}\frac{\overline{q}-1}{\overline{q}}}\;dy\bigg]^{\frac{p-1}p}. \label{2222222}
	\end{align}
	Using that
	\begin{align*}
	\frac1{\rho_j^N}\int_{B_{\rho_j}(x_j)}|c_\Omega(x+r_jy)|^{\overline{q}}\;dx\leq\mathbf{M}\big[|c_\Omega|^{\overline{q}}\big](x_j+r_jy)
	\end{align*}
	by definition \eqref{hl}, we obtain for \eqref{1111111} the following estimates by means of a H\"{o}lder inequality 
	\begin{align}
	& \int_{B_1}|y|^{(2s-N)p}\bigg(\frac1{\rho_j^N}\int_{B_{\rho_j}(x_j)}|c_\Omega(x+r_jy)|^{\overline{q}}\;dx\bigg)^\frac{p}{\overline{q}}\;dy\leq \nonumber \\
	& \leq\int_{B_1}|y|^{(2s-N)p}\mathbf{M}\big[|c_\Omega|^{\overline{q}}\big](x_j+r_jy)^\frac{p}{\overline{q}}\;dy \nonumber \\
	& \leq\bigg(\int_{B_1}|y|^{\frac{(2s-N)pq}{q-p}}\;dy\bigg)^{\frac{q-p}{pq}}\bigg(\int_{B_1}\mathbf{M}\big[|c_\Omega|^{\overline{q}}\big](x_j+r_jy)^\frac{q}{\overline{q}}\;dy\bigg)^{\frac{p}{q}} \nonumber \\
	& \leq \bigg(\int_{B_1}|y|^{\frac{(2s-N)pq}{q-p}}\;dy\bigg)^{\frac{q-p}{pq}} \big\|\mathbf{M}\big[|c_\Omega|^{\overline{q}}\big]\big\|^{p/\overline{q}}_{L^{q/\overline{q}}(\Omega')} \nonumber \\
	& \leq \bigg(\int_{B_1}|y|^{\frac{(2s-N)pq}{q-p}}\;dy\bigg)^{\frac{q-p}{pq}} \big\||c_\Omega|^{\overline{q}}\big\|^{p/\overline{q}}_{L^{q/\overline{q}}(\Omega')} \nonumber \\
	& \leq \bigg(\int_{B_1}|y|^{\frac{(2s-N)pq}{q-p}}\;dy\bigg)^{\frac{q-p}{pq}} \big\|c_\Omega\big\|^{p}_{L^q(\Omega')}  \label{cbnwjkb}
	\end{align}
	by \eqref{hl-ineq}. Remark that the assumption $1<p<N/(N-2s)$ ensures that
	\begin{align*}
	\frac{(2s-N)pq}{q-p}>-\frac{Nq}{q-p}>-N,
	\end{align*}
	which guarantees the finiteness of the first factor in \eqref{cbnwjkb}.
	
	Fix now $\overline{q}\in(p,q)$ and notice how this implies
	\begin{align*}
	\frac{p}{p-1}\frac{\overline{q}-1}{\overline{q}}>1.
	\end{align*}
	Using this, we estimate \eqref{2222222} as follows:
	\begin{align*}
	& \int_{B_1}\bigg(\frac1{\rho_j^N}\int_{B_{\rho_j}(x_j)}|u(x+r_jy)|^{\frac{\overline{q}}{\overline{q}-1}}\;dx\bigg)^{\frac{p}{p-1}\frac{\overline{q}-1}{\overline{q}}}\;dy
	\leq \\
	&\leq C
	\big\|u\big\|_{L^\infty(\Omega')}^{\frac{\overline{q}}{\overline{q}-1}(\frac{p}{p-1}\frac{\overline{q}-1}{\overline{q}}-1)}
	\int_{B_1}\bigg(\frac1{\rho_j^N}\int_{B_{\rho_j}(x_j)}|u(x+r_jy)|^{\frac{\overline{q}}{\overline{q}-1}}\;dx\bigg)\;dy \\
	&\leq C
	\big\|u\big\|_{L^\infty(\Omega')}^{(\frac{p}{p-1}-\frac{\overline{q}}{\overline{q}-1})+(\frac{\overline{q}}{\overline{q}-1}-1)}
	\int_{B_1}\bigg(\frac1{\rho_j^N}\int_{B_{\rho_j}(x_j)}u(x+r_jy)\;dx\bigg)\;dy \\
	&= C
	\big\|u\big\|_{L^\infty(\Omega')}^{\frac1{p-1}}
	\int_{B_1}\bigg(\frac1{\rho_j^N}\int_{B_{\rho_j}(x_j)}u(x+r_jy)\;dx\bigg)\;dy.
	\end{align*}
	Note that
	\begin{align}
	& \int_{B_1}\frac1{\rho_j^N}\int_{B_{\rho_j}(x_j)}u(x+r_jy)\;dx\;dy = 
	\frac1{r_j^N}\int_{B_{r_j}}\frac1{\rho_j^N}\int_{B_{\rho_j}}u(x_j+x+y)\;dx\;dy \nonumber \\
	& =\frac1{\rho_j^N}\int_{B_{\rho_j}}\frac1{r_j^N}\int_{B_{r_j}}u(x_j+x+y)\;dy\;dx 
	\leq\frac1{\rho_j^N}\int_{B_{\rho_j}}\frac1{r_j^N}\int_{B_{r_j+\rho_j}}u(x_j+z)\;dz\;dx \nonumber \\
	& =\omega_N\Big(\frac{r_j+\rho_j}{r_j}\Big)^N\frac1{\big(r_j+\rho_j\big)^N}\int_{B_{r_j+\rho_j}}u(x_j+z)\;dz \nonumber\\
	&
	\leq\frac{C}{\big(r_j+\rho_j\big)^N}\int_{B_{r_j+\rho_j}}u(x_j+z)\;dz\nonumber \\
	& \leq C\Big(\frac{2r_j}{r_j+\rho_j}\Big)^N\frac1{(2r_j)^N}\int_{B_{2r_j}}u(x_j+z)\;dz 
	\ \longrightarrow\;0 \qquad\text{as }j\to\infty.\label{green2-est}
	\end{align}
	
	We therefore deduce, by plugging in \eqref{to be estimated} the estimates contained in \eqref{rhoj}, \eqref{poisson-est}, \eqref{cbnwjkb}, and \eqref{green2-est},
	\begin{align*}
	\frac{r_j^{2s}}{j}\geq -C_1r_j^{2s}\eps_j+C_2r_j^{2s},
	\qquad\text{for some }{(\eps_j)}_{j\in\N}\subset(0,\infty),\ \eps_j\to 0\text{ as }j\to\infty.
	\end{align*}
	But this gives a contradiction for $j$ large enough.
\end{proof}

\bibliographystyle{ieeetr}

\begin{thebibliography}{10}

\bibitem{biswas2020hopf} A. Biswas and J. Lorinczi, \emph{Hopf's lemma for viscosity solutions to a class of non-local equations with applications.} Nonlinear Analysis (2020): 112194.

\bibitem{bogdan2003censored} K. Bogdan, K. Burdzy, and Z.-Q. Chen, \emph{Censored stable processes.} Probability theory and related fields 127.1 (2003): 89-152.

\bibitem{bonforte2018sharp} M. Bonforte, A. Figalli, and J.L. V\'{a}zquez, \emph{Sharp boundary behaviour of solutions to semilinear nonlocal elliptic equations.} Calculus of Variations and Partial Differential Equations 57.2 (2018): 57.

\bibitem{bucur2016some} C. Bucur, \emph{Some observations on the Green function for the ball in the fractional Laplace framework.} Communications on Pure \& Applied Analysis 15.2 (2016): 657.

\bibitem{caffarelli2010variational} L.A. Caffarelli, J.-M. Roquejoffre and Y. Sire, \emph{Variational problems with free boundaries for the fractional Laplacian.} Journal of the European Mathematical Society 12.5 (2010): 1151-1179.

\bibitem{chen2018dirichlet} H. Chen, \emph{The Dirichlet elliptic problem involving regional fractional Laplacian.} Journal of Mathematical Physics 59.7 (2018): 071504.

\bibitem{chen2020hopf} W. Chen, C. Li, and S. Qi, \emph{A Hopf lemma and regularity for fractional $p$-Laplacians.} Discrete \& Continuous Dynamical Systems-A 40.6 (2020): 3235.

\bibitem{chen2002green} Z.-Q. Chen, and P. Kim, \emph{Green function estimate for censored stable processes.} Probability theory and related fields 124.4 (2002): 595-610.

\bibitem{del2017hopf} L.M. Del Pezzo and A. Quaas, \emph{A Hopf's lemma and a strong minimum principle for the fractional p-Laplacian.} Journal of Differential Equations 263.1 (2017): 765-778.


\bibitem{duo2019comparative} S. Duo, H. Wang, and Y. Zhang, \emph{A comparative study on nonlocal diffusion operators related to the fractional Laplacian.} Discrete \& Continuous Dynamical Systems-B 24.1 (2019): 231.

\bibitem{fall2020regional} M.M. Fall, \emph{Regional fractional Laplacians: Boundary regularity.} Preprint available at \href{https://arxiv.org/abs/2007.04808v1}{ArXiv:2007.04808} (2020).

\bibitem{fall2015overdetermined} M.M. Fall and S. Jarohs, \emph{Overdetermined problems with fractional Laplacian.} ESAIM: Control, Optimisation and Calculus of Variations 21.4 (2015): 924-938.

\bibitem{greco2016hopf} A. Greco and R. Servadei, \emph{Hopf's lemma and constrained radial symmetry for the fractional Laplacian.} Mathematical Research Letters 23.3 (2016): 863-885.

\bibitem{guan2006reflected} Q.-Y. Guan, and Z.-M. Ma, \emph{Reflected symmetric $\alpha$-stable processes and regional fractional Laplacian.} Probability theory and related fields 134.4 (2006): 649-694.

\bibitem{jarohs2019strong} S. Jarohs and T. Weth, \emph{On the strong maximum principle for nonlocal operators.} Mathematische Zeitschrift 293.1 (2019): 81-111.

\bibitem{jin2019hopf} L. Jin and Y. Li, \emph{A Hopf's lemma and the boundary regularity for the fractional p-Laplacian.} Discrete \& Continuous Dynamical Systems-A 39.3 (2019): 1477.

\bibitem{li2019hopf} C. Li and W. Chen, \emph{A Hopf type lemma for fractional equations.} Proceedings of the American Mathematical Society 147.4 (2019): 1565-1575.



\end{thebibliography}

\end{document}